\newcommand{\co}{\colon}
\newcommand{\bR}{\mathbf{R}}
\newcommand{\cE}{\mathcal{E}}
\newcommand{\cR}{\mathcal{R}}
\newcommand{\cT}{\mathcal{T}}
\newcommand{\cW}{\mathcal{W}}
\newcommand{\za}{\alpha}
\newcommand{\ze}{\epsilon}
\newcommand{\zm}{\mu}
\newcommand{\zp}{\pi}
\newcommand{\zr}{\rho}
\newcommand{\zs}{\sigma}
\newcommand{\zt}{\tau}
\newcommand{\zF}{\Phi}
\newcommand{\zV}{\varPhi}
\newtheorem{thm}{Theorem}
\newtheorem{cor}[thm]{Corollary}
\newtheorem{lemma}[thm]{Lemma}
\theoremstyle{definition}
\newtheorem{ex}[thm]{Example}
\newcommand{\sections}{\renewcommand{\thethm}{\thesection.\arabic{thm}}
           \setcounter{thm}{0}}
\newcommand{\nosubsections}{\renewcommand{\thethm}{\thesection.\arabic{thm}}
           \setcounter{thm}{0}}
\newcommand{\linnum}{\stepcounter{thm}\tag{\thethm}}
\begin{document}
\title{Squaring rectangles for dumbbells}

\author{J. W. Cannon}
\address{Department of Mathematics\\ Brigham Young University\\ Provo, UT
84602\\ U.S.A.} \email{cannon@math.byu.edu}

\author{W. J. Floyd}
\address{Department of Mathematics\\ Virginia Tech\\
Blacksburg, VA 24061\\ U.S.A.} \email{floyd@math.vt.edu}
\urladdr{http://www.math.vt.edu/people/floyd}

\author{W. R. Parry}
\address{Department of Mathematics\\ Eastern Michigan University\\
Ypsilanti, MI 48197\\ U.S.A.} \email{walter.parry@emich.edu}

\date{\today}

\begin{abstract}
The theorem on squaring a rectangle (see Schramm \cite{S} and
Cannon-Floyd-Parry \cite{Magnus}) gives a combinatorial version of
the Riemann mapping theorem. We elucidate by example (the dumbbell)
some of the limitations of rectangle-squaring as an approximation to
the classical Riemamnn mapping.
\end{abstract}
\keywords{finite subdivision rule, combinatorial moduli, squaring rectangles}
\subjclass[2000]{Primary 52C20, 52C26}
\maketitle

\sections

\section{Introduction }\label{sec:intro}\nosubsections

A \emph{quadrilateral} is a planar disk $D$ with four distinguished
boundary points $a$, $b$, $c$, and $d$ that appear in clockwise
order on the boundary. These four points determine a \emph{top edge}
$ab$, a \emph{right edge} $bc$, a \emph{bottom edge} $cd$, and a
\emph{left edge} $da$. A \emph{tiling} of $D$ is a finite collection
of disks $T_i$, called tiles, whose union fills $D$, whose interiors
\emph{Int} $T_i$ are disjoint, and whose boundaries form a finite
graph $\Gamma$ in $D$ that contains $\partial D$.

The \emph{rectangle-squaring theorem} (see Schramm \cite{S} and
Cannon-Floyd-Parry \cite{Magnus}) states that there are integers
$p_i \ge 0$ parametrized by the tiles (for a tile $T_i$, $p_i$ is
called the \emph{weight} of $T_i$), not all equal to $0$, unique up
to scaling, such that squares $S_i$ of edge length $p_i$ can be
assembled with essentially the same adjacencies as the tiles $T_i$
to form a geometric rectangle with top, bottom, and sides
corresponding to the edges of $D$. Some combinatorial distortions
are inevitable. For example, in a squared rectangle, at most four
tiles can meet at a point. The distortions allowed are these: (1) a
vertex may expand into a vertical segment and (2) a tile may
collapse to a point. No other distortions are required.

The rectangle-squaring theorem is essentially a combinatorial
version of the Riemann Mapping Theorem. It has the advantage over
other versions of the Riemann Mapping Theorem that the integers
$p_i$ can be calculated by a terminating algorithm and can be
approximated rapidly by various simple procedures. As a consequence,
rectangle-squaring can be used as a rapid preprocessor for other
computational methods of approximating the Riemann mapping.

Schramm \cite{S} has pointed out that tilings given by the simplest
subdivision rules give squarings that need not converge to the
classical Riemann mapping. The purpose of this paper is to further
elucidate the limitations of the method.  In the classical Riemann
mapping, changing the domain of the mapping anywhere typically
changes the mapping everywhere. Our main result shows that this is
not true for rectangle squaring. A \emph{dumbbell} is a planar
quadrilateral, constructed from squares of equal size, that consists
of two blobs (the left ball and the right ball) at the end joined by
a relatively narrow bar of uniform height in the middle. We show
that for any choices of the left and right balls of a dumbbell, the
weights $p_i$ associated with the squares in the middle of the bar
are constant provided that the bar is sufficiently long and narrow.
In particular, subdivision cannot lead to tilings whose squarings
converge to the classical Riemann mapping.

In order to state our main theorem, we precisely define what we mean
by a dumbbell. A \emph{dumbbell} $D$ is a special sort of
conformal quadrilateral which is a subcomplex of the square tiling
of the plane. It consists of a \emph{left ball}, a \emph{right
ball}, and a \emph{bar}. The left ball, the bar, and the right
ball are all subcomplexes of $D$. The bar is a rectangle at least
six times as wide as high. It meets the left ball in a connected
subcomplex of the left side of the bar, and it meets the right ball
in a connected subcomplex of the right side of the bar. The bar of a
dumbbell is never empty, but we allow the balls to be empty. The
\emph{bar height} of $D$ is the number of squares in each column
of squares of the bar of $D$. Figure~\ref{fig:dumb} shows a dumbbell
with bar height $1$.

\begin{figure}[ht]
\centerline{\includegraphics{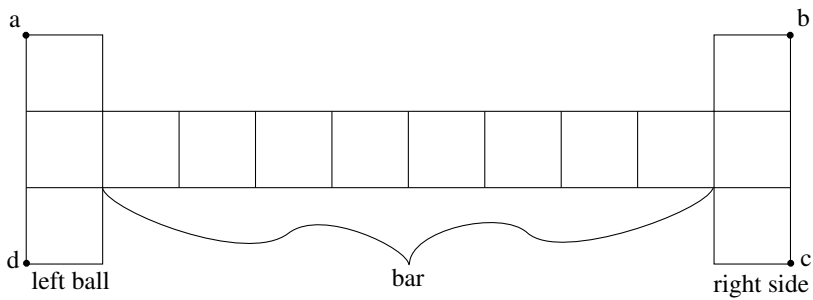}} \caption{A dumbbell $D$}
\label{fig:dumb}
\end{figure}

If $D$ is a dumbbell with bar height $n$, then a weight function
$\zr$ for $D$ is \emph{virtually bar uniform} if
$\zr(t)=\frac{1}{n}H_\zr$ for any tile $t$  in the bar of $D$ whose
skinny path distance to the balls of $D$ is at least $3n$. (The
skinny path distance between two subsets of $D$ is one less than the
minimum length of a chain of tiles which joins a point in one subset
to a point in the other subset. The skinny path distance is used
extensively in \cite{expi}.) We are now ready to state our main
theorem.

\medskip\noindent\textbf{Dumbbell Theorem.} Every fat
flow optimal weight function for a dumbbell is virtually bar
uniform.
\smallskip

As indicated above, the theorem has consequences for Riemann
mappings. Let $D$ be a dumbbell with bar height $n$, and let $\zr$
be a fat flow optimal weight function for $D$. If $t$ is a tile in
the bar of $D$ whose skinny path distance to the balls of $D$ is at
least $3n$, then the dumbbell theorem implies that
$\zr(t)=\frac{1}{n}H_\zr$. Now we subdivide $D$ using the binary
square finite subdivision rule $\cR$, which subdivides each square
into four subsquares. We see that $\cR(D)$ is a dumbbell with bar
height $2n$.  Let $s$ be a tile of $\cR(D)$ contained in $t$. Then
the skinny path distance from $s$ to the balls of $\cR(D)$ is at
least $6n$.  If $\zs$ is a fat flow optimal weight function for
$\cR(D)$, then the dumbbell theorem implies that
$\zs(s)=\frac{1}{2n}H_\zs$. It follows that as we repeatedly
subdivide $D$ using the binary square finite subdivision rule and
normalize the optimal weight functions so that they have the same
height, if they converge, then they converge to the weight function
of an affine function in the middle of the bar of $D$.  The only way
that a Riemann mapping of $D$ can be affine in the middle of the bar
of $D$ is for $D$ to be just a rectangle. Thus our sequence of
optimal weight functions almost never converges to the weight
function of a Riemann mapping of $D$. We formally state this result
as a corollary to the dumbbell theorem.

\bigskip\noindent\textbf{Corollary.} When we repeatedly subdivide a
dumbbell using the binary square subdivision rule, if the resulting
sequence of fat flow optimal weight functions converges to the
weight function of a Riemann mapping, then the dumbbell is just a
rectangle.
\bigskip

Here is a brief outline of the proof of the dumbbell theorem.  Let
$D$ be a dumbbell, and let $\zr$ be a fat flow optimal weight
function for $D$. From $\zr$ we construct a new weight function
$\zs$ for $D$. The weight function $\zs$ is constructed so that it
is virtually bar uniform and if $t$ is a tile of $D$ not in the bar
of $D$, then $\zs(t)=\zr(t)$.  Much effort shows that $H_\zs=H_\zr$.
Because $\zs$ is a weight function for $D$ with $H_\zs=H_\zr$ and
$\zr$ is optimal, $A_\zr\le A_\zs$. Because $\zs$ agrees with $\zr$
outside the bar of $D$, it follows that there exists a column of
squares in the bar of $D$ whose $\zr$-area is at most its
$\zs$-area.  The main difficulty in the proof lies in controlling
the $\zs$-areas of such columns of squares.  The key result in this
regard is Theorem~\ref{thm:estimate}. Most of this proof is devoted
to proving Theorem~\ref{thm:estimate}. It gives us enough
information about the $\zs$-areas of such columns of squares to
conclude that $H_\zs=H_\zr$ and eventually that $\zs=\zr$. Thus
$\zr$ is virtually bar uniform.

We give some simple examples in Section~\ref{sec:examples}.
Sections~\ref{sec:vectors} and \ref{sec:cuts} are relatively easy.
The skinny cut function $\zV$ is defined in the first paragraph of
Section~\ref{sec:function}.  Once the reader understands the
definition of $\zV$, the statement of Theorem~\ref{thm:estimate} can
be understood.  After understanding the statement of
Theorem~\ref{thm:estimate}, the reader can read
Section~\ref{sec:proof} to get a better grasp of the proof of the
dumbbell theorem outlined in the previous paragraph.
Theorem~\ref{thm:estimate} is the key ingredient, and its proof
presents the greatest difficulties in our argument.

In Section~\ref{sec:notes} we discuss without proofs an assortment
of results which are related to (but not used in) our proof of the
dumbbell theorem.

\section{Examples}\label{sec:examples}\nosubsections

We give some simple examples here to illustrate the theorem.

\begin{ex}
We begin with an example that motivated this work. Consider the
topological disk $D_1$ shown in Figure~\ref{fig:ell0}. The left ball
of $D_1$ is a union of two tiles, the bar of $D_1$ is a single tile,
and the right ball of $D_1$ is empty. $D_1$ is not a dumbbell
because the bar isn't wide enough, but since it has so few tiles it
is easier of analyze. Figure~\ref{fig:ellsubs} shows the first three
subdivisions of $D_1$ with respect to the binary square subdivision
rule; the tiling $\cR^{n+1}(D_1)$ is obtained from $\cR^n(D_1)$ by
subdividing each square into four subsquares.  We consider each
subdivision as a conformal quadrilateral by choosing the same four
points as vertices and the same labeling of the edges.
\end{ex}

\begin{figure}[ht]
\centerline{\includegraphics{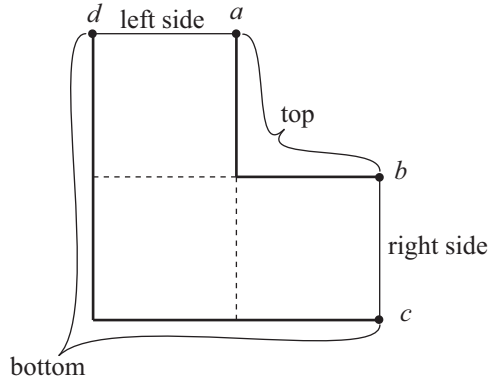}} \caption{The quadrilateral
$D_1$} \label{fig:ell0}
\end{figure}

\begin{figure}[ht]
\centerline{\includegraphics{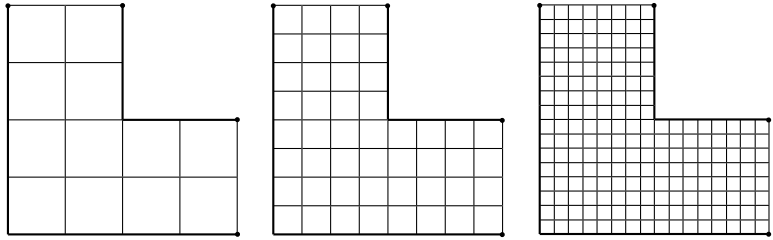}} \caption{The first three
subdivisions of $D_1$} \label{fig:ellsubs}
\end{figure}

We considered this example because we were interested in the
squared rectangles corresponding to the optimal weight functions
for the fat flow moduli of this sequence of quadrilaterals. Given
a tiling $\cT$ (or, more generally, a shingling) of a conformal
quadrilateral $Q$, a {\em weight function} on $\cT$ is a
non-negative real-valued function $\rho$ on the set of tiles of
$\cT$. If $\rho$ is a weight function on $\cT$ and $t$ is a tile
of $\cT$, then $\rho(t)$ is the {\em weight} of $t$. One can use a
weight function to assign ``lengths'' to paths in $Q$ and
``areas'' to subsets of $Q$. The {\em $\rho$-length} of a path
$\alpha$ in $Q$ is the sum of the weights of the tiles that the
image of $\alpha$ intersects, and the {\em $\rho$-area}
$A_\rho(W)$ of a subset $W$ of $Q$ is the sum of the squares of
the weights of the tiles that intersect $W$. One then defines the
{\em $\rho$-height} $H_\rho$ of $Q$ to be the minimum
$\rho$-length of a path in $Q$ that joins the top and bottom of
$Q$, and one defines the {\em $\rho$-area} $A_{\rho}$ of $Q$ to be
$A_{\rho}(Q)$. The {\em fat flow modulus} of $Q$ with respect to
$\rho$ is $H_{\rho}^2/A_{\rho}$. The {\em optimal weight function}
is a weight function whose fat flow modulus is the supremum of the
fat flow moduli of weight functions; it exists and is unique up to
scaling. There is a squared rectangle corresponding to $Q$ whose
squares correspond to the tiles of $\cT$ with non-zero weights
under the optimal weight function; furthermore, the side length of
one of these squares is the weight of the corresponding tile.

\begin{figure}[ht]
\centerline{\includegraphics{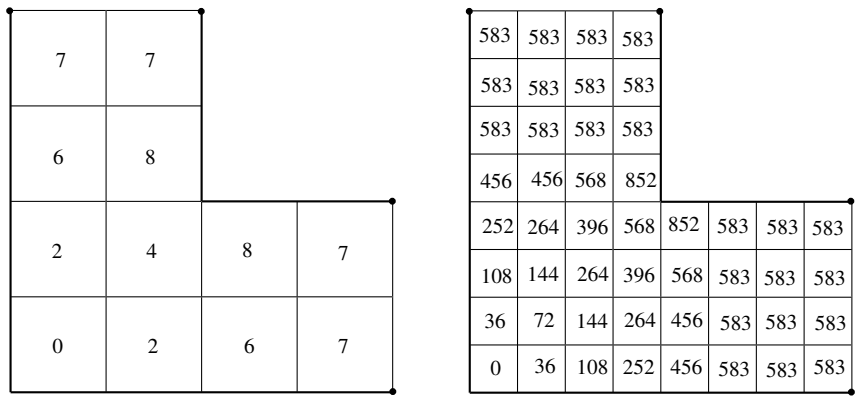}} \caption{Optimal weight
functions for the first two subdivisions of $D_1$}
\label{fig:ellwts}
\end{figure}

\begin{figure}[ht]
\centerline{\includegraphics{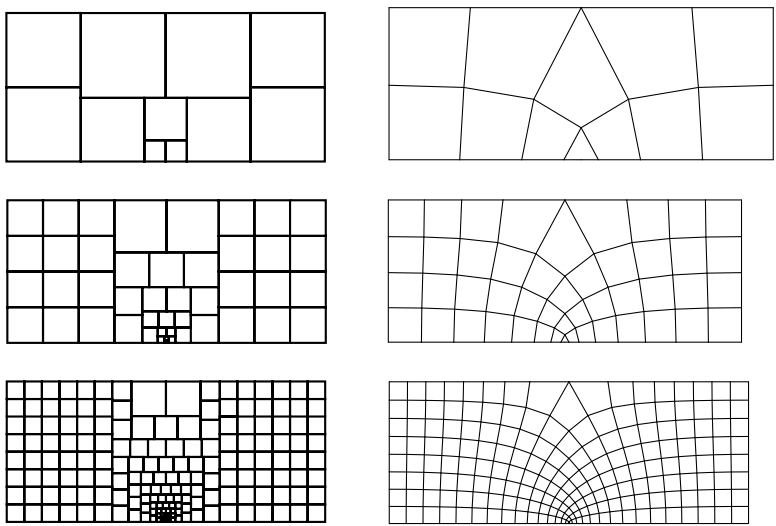}} \caption{The squared
rectangles and circle packings for the first three subdivisions of
$D_1$} \label{fig:ellsrcp}
\end{figure}

Figure~\ref{fig:ellwts} gives optimal weight functions for the
first two subdivisions of the quadrilateral $D_1$, and the left
side of Figure~\ref{fig:ellsrcp} gives the squared rectangles for
the first three subdivisions of $D_1$. (See \cite{Magnus} for
detailed information about optimal weight functions for tilings.)
For each of these tilings of $D_1$, one can define a circle
packing whose carrier complex is obtained from the tiling by
adding a barycenter to each tile and then subdividing each tile
into triangles by adding an edge from each vertex of the tile to
the barycenter. By only drawing the edges of the packed carrier
complex that correspond to edges of the tiling, one can use the
circle packing to draw the tilings. On the right side of
Figure~\ref{fig:ellsrcp}, this is done for the first three
subdivisions of $D_1$ using Stephenson's program CirclePack
\cite{CP}.

While one cannot expect discrete functions from squared rectangles
for tilings to converge to Riemann maps, He and Schramm showed in
\cite{HS} that under general hypotheses discrete functions
associated to circle packings converge to Riemann maps. It may seem
that the squared rectangles and the circle packings are not closely
related, but in some computational examples with the pentagonal and
dodecahedral subdivision rules we have found that using our squared
rectangle software \cite{precp} in conjuction with CirclePack
\cite{CP} can lead to dramatic reductions in the total computation
time for producing the packings. (In an example with over 1,600,000
vertices, using both programs led to a reduction in the computation
time from almost 38 hours to under six hours.) We wondered whether
the squared rectangles for the subdivisions of $D_1$ would define
discrete functions which were closely related to the Riemann map for
$D_1$.

But in this example the optimal weight functions appeared to be
constant near the two sides of the quadrilateral. As we described
above, if they really are constant then they couldn't possibly give
discrete approximations to the Riemann map. We found this
``apparent'' constancy of weights near the sides to be very
surprising, and decided to look more closely at optimal weight
functions for quadrilaterals made out of square tiles.

\begin{ex}
Let $D$ be the dumbbell drawn in Figure~\ref{fig:dumb}. The left
edge of $D$ is the left side of the left ball, and the right edge
of $D$ is the right side of the ride ball. The bar of $D$ has
height $1$ and width $8$. Figure~\ref{fig:dumbl1-3} gives squared
rectangles for the first three subdivisions of $D$. For these
subdivisions, the weights $p_i$ are constant on the entire bar and
not just on the middle fourth of the bar.
\end{ex}

\begin{figure}[ht]
\centerline{\includegraphics{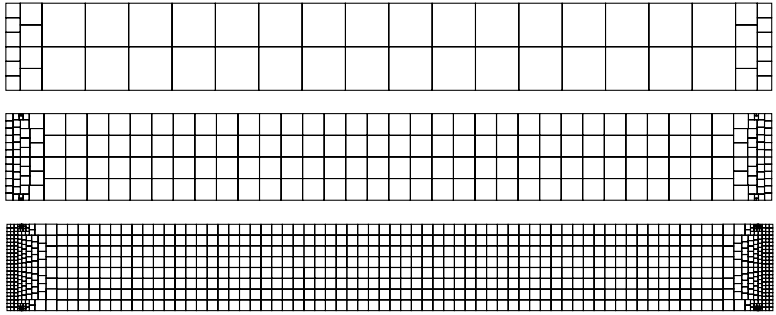}} \caption{Squared
rectangles for the first three subdivisions of $D$}
\label{fig:dumbl1-3}
\end{figure}

\begin{ex}
The dumbbell $D_2$ shown in Figure~\ref{fig:tray} is similar to the
dumbbell $D$. The two dumbbells have the same bars, but for $D_2$
the left and right balls are smaller. The left edge of $D_2$ is the
top side of the left ball and the right edge of $D_2$ is the top
side of the right ball. Squared rectangles for the first three
subdivisions of $D_2$ are shown in Figure~\ref{fig:trayl1-3}. The
dumbbell theorem guarantees that the weights are constant in the
middle fourth of the bar, but here they are constant on most of the
bar. The quadrilateral $D_1$ is a subcomplex of $D_2$.
\end{ex}

\begin{figure}[ht]
\centerline{\includegraphics{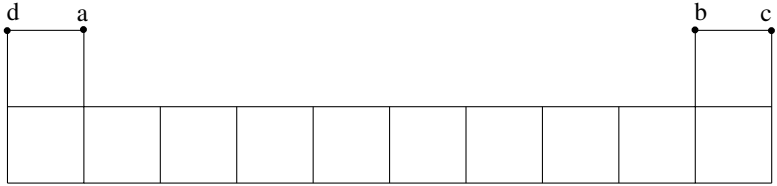}} \caption{The dumbbell $D_2$}
\label{fig:tray}
\end{figure}

\begin{figure}[ht]
\centerline{\includegraphics{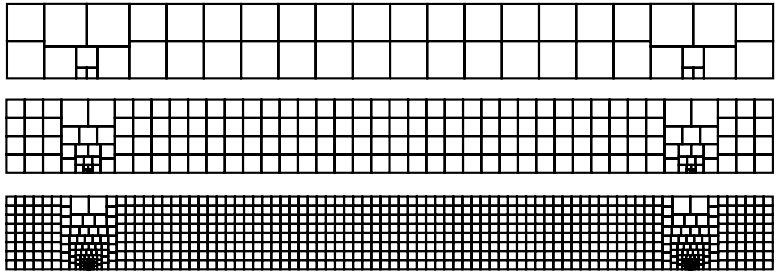}} \caption{Squared
rectangles for the first three subdivisions of $D_2$}
\label{fig:trayl1-3}
\end{figure}

\section{Weight vectors }\label{sec:vectors}\nosubsections

This section deals with basic properties of vectors in Euclidean
space.

We fix a positive integer $n$, which will be the bar height of a
dumbbell under consideration. A \emph{weight vector} is an element
of $\bR^n$ whose components are all nonnegative and not all 0. We
denote by $\cW$ the set of all weight vectors in $\bR^n$. The
\emph{height} $H(x)$ and \emph{area} $A(x)$ of a weight vector
$x=(x_1,\dotsc,x_n)$ are defined by
  \begin{equation*}
H(x)=\sum_{i=1}^{n}x_i \quad\text{and}\quad
A(x)=\sum_{i=1}^{n}x_i^2.
  \end{equation*}
Given a positive real number $h$, we let
$w_h={\textstyle(\frac{h}{n},\dotsc,\frac{h}{n})}$ and
  \begin{equation*}
\cW_h=\{x\in \cW:H(x)=h\}.
  \end{equation*}
We see that $w_h\in \cW_h$ and that $A(w_h)=\frac{h^2}{n}$.

\begin{lemma}\label{lemma:distance}  Let $h$ be a positive
real number, let $x_1,x_2\in \cW_h$, and let $d_1$ and $d_2$ be the
distances from $x_1$ and $x_2$ to $w_h$.  Then $A(x_1)\le A(x_2)$ if
and only if $d_1\le d_2$.
\end{lemma}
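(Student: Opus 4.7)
The plan is to expand the squared Euclidean distance $d_i^2=\|x_i-w_h\|^2$ and use the constraint $H(x_i)=h$ to relate it directly to $A(x_i)$. Concretely, write $x_i=(x_{i,1},\dotsc,x_{i,n})$ and compute
\begin{equation*}
d_i^2=\sum_{j=1}^{n}\Bigl(x_{i,j}-\tfrac{h}{n}\Bigr)^2
=\sum_{j=1}^{n}x_{i,j}^2-\tfrac{2h}{n}\sum_{j=1}^{n}x_{i,j}+n\cdot\tfrac{h^2}{n^2}.
\end{equation*}
Since $x_i\in\cW_h$, the middle sum equals $h$, so the last two terms collapse to $-h^2/n$. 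Hence
\begin{equation*}
d_i^2=A(x_i)-\tfrac{h^2}{n}=A(x_i)-A(w_h).
\end{equation*}

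With this identity in hand, the lemma is immediate: the map $A(x)\mapsto d^2$ on $\cW_h$ is an affine function with positive leading coefficient (indeed, slope $1$ after subtracting the constant $A(w_h)$), so $A(x_1)\le A(x_2)$ if and only if $d_1^2\le d_2^2$, which in turn is equivalent to $d_1\le d_2$ since distances are nonnegative.

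There is no real obstacle here; the only thing to watch is the book-keeping of the cross term, which is precisely what uses the hypothesis $x_1,x_2\in\cW_h$ (not merely $\cW$). Geometrically, the content of the calculation is that $\cW_h$ lies in the affine hyperplane $\{H=h\}$ whose orthogonal projection onto the line $\bR w_h$ lands exactly at $w_h$, so that $w_h$ is the nearest point of that line to every element of $\cW_h$, and $A$ differs from squared distance to $w_h$ by the constant $A(w_h)$.
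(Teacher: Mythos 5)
Your proof is correct and is essentially the paper's own argument: both compute $\|x-w_h\|^2 = A(x) - A(w_h)$ by exploiting that $x\cdot w_h = h^2/n = w_h\cdot w_h$ for $x\in\cW_h$ (you do it in coordinates, the paper via dot products, which is only a cosmetic difference). The conclusion then follows immediately in both versions.
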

  \begin{proof} Let $x\in \cW_h$.  Since $H(x)$ is the dot product of
$x$ with the vector whose components are all 1, we have that
$x\cdot(\frac{n}{h}w_h)=h$.  Hence $x\cdot w_h=\frac{h^2}{n}=w_h\cdot
w_h$.  So the square of the distance from $x$ to $w_h$ is
  \begin{equation*}
(x-w_h)\cdot(x-w_h)=x\cdot x-2x\cdot w_h+w_h\cdot w_h=x\cdot
x-w_h\cdot w_h.
  \end{equation*}
Thus decreasing $A(x)$ is equivalent to decreasing the distance from
$x$ to $w_h$.  This proves the lemma.

\end{proof}

\begin{cor}\label{cor:wh}  The weight vector $w_h$ is the unique
element of $\cW_h$ with least area.
\end{cor}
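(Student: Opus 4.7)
The plan is to read off the corollary as an immediate consequence of Lemma~\ref{lemma:distance}, by specializing one of the two weight vectors in that lemma to $w_h$ itself. The only preliminary step is to verify that $w_h$ actually lies in $\mathcal{W}_h$: each component $h/n$ is positive since $h>0$, so they are nonnegative and not all zero, and their sum is exactly $h$. Thus $w_h \in \mathcal{W}_h$ is a legitimate candidate, and the distance from $w_h$ to itself is $0$.

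Next I would take an arbitrary $x \in \mathcal{W}_h$ and apply Lemma~\ref{lemma:distance} with the pair $(x_1,x_2) = (w_h, x)$. The corresponding distances are $d_1 = 0$ and $d_2 = \|x - w_h\|$, and the ``if'' direction of the biconditional gives $A(w_h) \le A(x)$, since $0 \le d_2$ holds automatically. This shows $w_h$ achieves the minimum of $A$ on $\mathcal{W}_h$.

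For uniqueness, suppose $x \in \mathcal{W}_h$ satisfies $A(x) = A(w_h)$. Applying the lemma in both directions — once with $(x_1,x_2)=(w_h,x)$ and once with $(x_1,x_2)=(x,w_h)$ — yields $\|x-w_h\| \ge 0$ and $\|x-w_h\| \le 0$, so $\|x - w_h\| = 0$ and $x = w_h$. There is no real obstacle here; the corollary is essentially a rephrasing of the lemma once one notes that $w_h$ is the unique point of $\mathcal{W}_h$ at distance zero from itself.
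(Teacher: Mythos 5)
Your argument is correct and is exactly the intended deduction: the paper states the corollary as an immediate consequence of Lemma~\ref{lemma:distance} (whose proof shows $A(x)=\|x-w_h\|^2+\|w_h\|^2$ on $\cW_h$), and specializing one vector to $w_h$, which is at distance zero from itself, gives both minimality and uniqueness just as you describe.
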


\begin{lemma}\label{lemma:decreasing}  Let $x$ and $y$ be
distinct weight vectors such that $A(x)\ge A(y)$.  Then the area
function restricted to the line segment from $x$ to $y$ is strictly
decreasing at $x$.
\end{lemma}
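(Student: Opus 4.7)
The plan is to parametrize the segment and reduce to a one‑variable calculus statement. Let $z(t) = (1-t)x + ty$ for $t \in [0,1]$, and consider $f(t) = A(z(t))$. Since $A$ is the squared Euclidean norm, $f$ is a quadratic in $t$, and a direct expansion gives
\begin{equation*}
f(t) = A(x) + 2t\,x\cdot(y - x) + t^{2}\,A(y - x).
\end{equation*}
To prove $f$ is strictly decreasing at $t = 0$, it suffices to show $f'(0) = 2\,x\cdot(y-x) < 0$.

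The key trick is to evaluate at $t=1$ to solve for $x\cdot(y-x)$. Setting $t=1$ yields $A(y) = A(x) + 2\,x\cdot(y-x) + A(y-x)$, hence
\begin{equation*}
2\,x\cdot(y - x) = \bigl(A(y) - A(x)\bigr) - A(y - x).
\end{equation*}
The hypothesis $A(x) \ge A(y)$ makes the first term on the right nonpositive, and the hypothesis $x \ne y$ makes $A(y-x) = \|y-x\|^{2}$ strictly positive. Therefore $f'(0) < 0$, so $f$ is strictly decreasing at $0$, which is exactly the claim.

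There is no real obstacle here; the lemma is essentially the observation that if you move along a straight line and the endpoint of the line has no larger squared norm than the starting point, then you must start by moving toward the origin. The only thing to be careful about is not to invoke Lemma~\ref{lemma:distance}, since that lemma requires both endpoints to lie in the same $\cW_{h}$, whereas here $x$ and $y$ are arbitrary weight vectors of possibly different heights. The elementary quadratic‑expansion argument above avoids that issue entirely and uses only the hypotheses $A(x) \ge A(y)$ and $x \ne y$.
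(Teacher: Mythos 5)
Your proof is correct and is essentially the same as the paper's: both parametrize the segment, expand $A$ as a quadratic in $t$, and show the derivative at $t=0$ equals $A(y)-A(x)-\|y-x\|^2<0$, using $A(x)\ge A(y)$ and $x\ne y$. The only cosmetic difference is that you package the paper's inequality $2x\cdot y < x\cdot x + y\cdot y$ as the identity obtained by evaluating the quadratic at $t=1$.
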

  \begin{proof} The line segment from $x$ to $y$ is traversed by
$(y-x)t+x$ as the parameter  $t$ varies from 0 to 1.  We have that
  \begin{equation*}
A((y-x)t+x)=((y-x)t+x)\cdot((y-x)t+x)=(y-x)\cdot(y-x)t^2+2x\cdot(y-x)t+x\cdot
x.
  \end{equation*}
Viewing this as a function of $t$, its derivative at $t=0$ is
$2x\cdot(y-x)$.  But since
  \begin{equation*}
x\cdot x-2x\cdot y+y\cdot y=(x-y)\cdot(x-y)>0,
  \end{equation*}
we have that $2x\cdot y<x\cdot x+y\cdot y$.  So
  \begin{equation*}
2x\cdot(y-x)=2x\cdot y-2x\cdot x<y\cdot y-x\cdot x=A(y)-A(x)\le0.
  \end{equation*}
This proves the lemma.

\end{proof}

\begin{lemma}\label{lemma:minimum}  Every nonempty compact
convex set of weight vectors in $\bR^n$ contains a unique weight
vector with minimal area.
\end{lemma}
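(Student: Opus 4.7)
The plan is to prove existence and uniqueness separately, both by appealing to tools already in hand. Let $K$ denote the given nonempty compact convex set of weight vectors. For existence, observe that the area function $A(x)=x\cdot x$ is a polynomial, hence continuous on $\bR^n$. Since $K$ is nonempty and compact, $A$ attains its infimum on $K$, so at least one minimizer exists.

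For uniqueness, the natural move is to invoke Lemma~\ref{lemma:decreasing}. Suppose for contradiction that $x,y\in K$ are distinct weight vectors both achieving the minimal area of $A$ on $K$; in particular $A(x)\ge A(y)$. Because $K$ is convex, the line segment from $x$ to $y$ lies entirely in $K$, and because every point of $K$ is a weight vector, the segment is a segment of weight vectors. Lemma~\ref{lemma:decreasing} then asserts that $A$ restricted to this segment is strictly decreasing at $x$. Consequently, for sufficiently small $t>0$ the point $(1-t)x+ty$ lies in $K$ and satisfies $A((1-t)x+ty)<A(x)$, contradicting the minimality of $A(x)$ on $K$.

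The only (mild) obstacle is ensuring that Lemma~\ref{lemma:decreasing} legitimately applies, i.e.\ that the interpolated points really are weight vectors rather than escaping $\cW$; this is immediate from the hypothesis that $K\subseteq \cW$ is convex, so the whole segment stays in $\cW$, but it is worth flagging so the invocation of the lemma is clean. Beyond this bookkeeping, the argument is a direct consequence of continuity, compactness, and the strict convexity of $A$ that is packaged into Lemma~\ref{lemma:decreasing}.
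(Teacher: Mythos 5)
Your proof is correct and follows essentially the same route as the paper: continuity plus compactness for existence, and Lemma~\ref{lemma:decreasing} applied to the segment between two putative minimizers for uniqueness. Your extra remark that convexity of $K\subseteq\cW$ keeps the segment inside the set of weight vectors is a fair point of care that the paper leaves implicit.
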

  \begin{proof} Let $C$ be a nonempty compact convex set of weight
vectors in $\bR^n$.  Since the area function is continuous, it has a
minimum on $C$.  Let $x$ and $y$ be weight vectors in $C$ with minimal
area.  Lemma~\ref{lemma:decreasing} implies that if $x\ne y$, then the
area function restricted to the line segment from $x$ to $y$ is
decreasing at $x$.  This is impossible because $x$ and $y$ have
minimal area, and so $x=y$.

This proves Lemma~\ref{lemma:minimum}.

\end{proof}

\section{Weight functions which are sums of strictly monotonic cuts
}\label{sec:cuts}\nosubsections

This section deals with weight functions on rectangles that are
viewed as conformal quadrilaterals.  Eventually such a rectangle
will be chosen to be contained in the bar of a dumbbell with the top
of the rectangle in the top of the bar and the bottom of the
rectangle in the bottom of the bar.

Let $R$ be a rectangle in the plane tiled in the straightforward
way by squares with $n$ rows and $m$ columns of squares.  Let
$T_{ij}$ be the square in row $i$ and column $j$ for
$i\in\{1,\ldots,n\}$ and $j\in\{1,\ldots,m\}$.  With an eye toward
combinatorial conformal moduli, we view $R$ as a quadrilateral in
the straightforward way. A skinny cut for $R$ is \emph{strictly
monotonic} if it contains exactly one tile in every column of $R$.
A weight function $\zr$ on $R$ is a \emph{sum of strictly
monotonic skinny cuts} if $\zr$ is a nonnegative linear
combination of characteristic functions of strictly monotonic
skinny cuts.

\begin{lemma}\label{lemma:inequalities}  In the
situation of the previous paragraph, the weight function $\zr$ is a
sum of strictly monotonic skinny cuts if and only if for every
$j\in\{1,\ldots,m-1\}$ we have that
  \begin{equation*}
\sum_{i=1}^{n}\zr(T_{ij})=\sum_{i=1}^{n}\zr(T_{i\,j+1})
  \end{equation*}
and
  \begin{equation*}
\sum_{i=1}^{k}\zr(T_{ij})\le\sum_{i=1}^{k+1}\zr(T_{i\,j+1})\text{
and }
\sum_{i=1}^{k}\zr(T_{i\,j+1})\le\sum_{i=1}^{k+1}\zr(T_{ij})\text{
for every }k\in\{1,\dotsc,n-1\}.
  \end{equation*}
\end{lemma}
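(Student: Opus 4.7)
My plan is to handle the two implications separately: the forward (``only if'') direction is a direct computation, and the reverse (``if'') direction follows from an explicit decomposition of $\zr$ via its column-wise level sets.

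For the forward direction, I would verify the claim for a single strictly monotonic skinny cut $c$. Such a cut picks a single row $i(j)$ in each column $j$, and skinny-cut connectivity forces $|i(j{+}1)-i(j)|\le 1$. The characteristic function $\chi_c$ then has column sum identically $1$, and the partial sum $\sum_{i=1}^{k}\chi_c(T_{ij})$ equals $1$ exactly when $i(j)\le k$; in that case $i(j{+}1)\le k{+}1$ yields $\sum_{i=1}^{k+1}\chi_c(T_{i,j+1})=1$, and the symmetric inequality follows the same way. All three properties are preserved under nonnegative linear combinations, so the forward direction is immediate.

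For the reverse direction, given $\zr$ satisfying the stated identity and inequalities, let $H$ be the common column sum, set $C_k(j)=\sum_{i=1}^{k}\zr(T_{ij})$ for $k=0,1,\ldots,n$, and for each $t\in(0,H]$ and $j\in\{1,\ldots,m\}$ let $k(t,j)$ be the unique index with $C_{k(t,j)-1}(j)<t\le C_{k(t,j)}(j)$. The crux of the argument is the king's-move bound $|k(t,j{+}1)-k(t,j)|\le 1$ for every $t$: if $k(t,j{+}1)\ge k(t,j)+2$, then writing $k=k(t,j)\in\{1,\ldots,n-1\}$ forces $C_{k+1}(j{+}1)<t\le C_k(j)$, contradicting the hypothesis $C_k(j)\le C_{k+1}(j{+}1)$; the symmetric hypothesis rules out the reverse jump. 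Therefore for each $t$ the collection $c(t)=\{T_{k(t,j),j}\}_{j=1}^{m}$ is a strictly monotonic skinny cut. Since $t\mapsto c(t)$ is piecewise constant on $(0,H]$, taking only finitely many values $c_1,\ldots,c_N$ on subintervals of Lebesgue measures $\lambda_1,\ldots,\lambda_N$, and since $\zr(T_{kj})=C_k(j)-C_{k-1}(j)$ equals the Lebesgue measure of $\{t:k(t,j)=k\}$, one reads off $\zr=\sum_\ell \lambda_\ell\chi_{c_\ell}$.

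The hard part is the king's-move bound on the level sets; the two inequalities in the hypothesis are calibrated exactly to supply it, while the equal-column-sum identity is what allows a single parameter $t$ to select a tile across all columns simultaneously. Once that bound is in hand, the decomposition is the straightforward discrete analogue of slicing a nonnegative function into its level sets.
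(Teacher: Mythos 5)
Your proof is correct, and while the forward direction coincides with the paper's (both rest on the observation that a strictly monotonic skinny cut meeting one of $T_{1j},\dotsc,T_{kj}$ must, by the king's-move adjacency of consecutive tiles, meet one of $T_{1\,j+1},\dotsc,T_{k+1\,j+1}$, plus linearity), your backward direction is genuinely different from the paper's. The paper argues by induction on the number of tiles of nonzero weight: it lets $S_j$ be the topmost tile of nonzero weight in column $j$, uses the inequalities to check that $\{S_1,\dotsc,S_m\}$ is a strictly monotonic skinny cut, subtracts $\min_j\zr(S_j)$ times its characteristic function, and recurses on the resulting function, which vanishes on strictly more tiles. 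Your layer-cake argument instead indexes cuts by a height parameter $t\in(0,H]$ through the cumulative column sums $C_k(j)$; the bound $|k(t,j+1)-k(t,j)|\le 1$ is exactly the content of the two partial-sum inequalities, and identifying $\zr(T_{kj})$ with the Lebesgue measure of $\{t:k(t,j)=k\}$ produces the entire decomposition at once. (Both arguments use, as the paper notes explicitly, that the zero function is not an allowable weight function, so that $H>0$.) The paper's induction is marginally more elementary, needing only finitely many subtractions; your parametrization by $t$ has the advantage of exhibiting the whole family of cuts simultaneously with explicit coefficients, and it is closer in spirit to the way the lemma is used in Section~\ref{sec:proof}, where an optimal weight function is rescaled to be a sum of $h$ minimal skinny cuts and new weights are obtained by counting cuts with a prescribed behavior.
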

  \begin{proof} Suppose that $\zr$ is a sum of strictly monotonic
skinny cuts.  Let $j\in\{1,\ldots,m-1\}$, and let
$k\in\{1,\ldots,n-1\}$.  Then every skinny cut defining $\zr$ which
contains one of $T_{1j},\dotsc,T_{kj}$ also contains one of
$T_{1\,j+1},\dotsc,T_{k+1\,j+1}$.  Since $\zr$ is a nonnegative
linear combination of such skinny cuts, it follows that
  \begin{equation*}
\sum_{i=1}^{k}\zr(T_{ij})\le\sum_{i=1}^{k+1}\zr(T_{i\,j+1}).
  \end{equation*}
Likewise
  \begin{equation*}
\sum_{i=1}^{k}\zr(T_{i\,j+1})\le\sum_{i=1}^{k+1}\zr(T_{ij}).
  \end{equation*}
In the case $k=n$, every skinny cut defining $\zr$ which contains
one of $T_{1j},\dotsc,T_{nj}$ also contains one of
$T_{1\,j+1},\dotsc,T_{n\,j+1}$. So
  \begin{equation*}
\sum_{i=1}^{n}\zr(T_{ij})\le\sum_{i=1}^{n}\zr(T_{i\,j+1}).
  \end{equation*}
The opposite inequality holds by symmetry, and so the inequality is
actually equality.  This proves the forward implication of the lemma.

For the backward implication, suppose that these equalities and
inequalities are satisfied.  We argue by induction on the number $N$
of tiles $T_{ij}$ such that $\zr(T_{ij})\ne0$.  The statement to be
proved is vacuously true if $N=0$.  So suppose that $N>0$ and that
the statement is true for smaller values of $N$.  Let
$j\in\{1,\dotsc,m\}$.  Since the zero function is not an allowable
weight function and since column sums of $\zr$-weights are equal, it
follows that the sum of the $\zr$-weights of the tiles in column $j$
is not 0.  Let $S_j$ be the tile $T_{ij}$ with the smallest value of
$i$ such that $\zr(T_{ij})\ne0$.  The inequalities imply that
$\{S_1,\dotsc,S_m\}$ is a strictly monotonic skinny cut.  Let
$c=\min\{\zr(S_1),\dotsc,\zr(S_m)\}$.  Let $\zr'$ be the function on
the tiles of $R$ gotten from $\zr$ by subtracting $c$ times the
characteristic function of $\{S_1,\dotsc,S_m\}$.  If $\zr'$ is the
zero function, then $\zr$ is just $c$ times the characteristic
function of $\{S_1,\dotsc,S_m\}$.  Otherwise $\zr'$ is a weight
function which satisfies the inequalities and it has value 0 at more
tiles than does $\zr$.  So by induction $\zr'$ is a sum of strictly
monotonic skinny cuts.  It follows that $\zr$ is too.

This completes the proof of Lemma~\ref{lemma:inequalities}.

\end{proof}

\begin{lemma}\label{lemma:exists}  Let $R$ be a rectangle as
above tiled by $n$ rows and $m$ columns of squares $T_{ij}$.  Let
$x=(x_1,\dotsc,x_n)$ be a weight vector in $\bR^n$.  Then there exists
a unique weight function $\zr$ for $R$ with minimal area subject to
the conditions that $\zr$ is a sum of strictly monotonic skinny cuts
and $\zr(T_{i1})=x_i$ for every $i\in\{1,\ldots,n\}$.
\end{lemma}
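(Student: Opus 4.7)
The plan is to realize the set of admissible weight functions as a nonempty compact convex subset of $\cW$ in $\bR^{nm}$ and then invoke Lemma~\ref{lemma:minimum}.

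First I would identify $\zr$ with the vector $(\zr(T_{ij}))_{i,j}$ in $\bR^{nm}$, so that the area $A_\zr = \sum_{i,j} \zr(T_{ij})^2$ coincides with the squared Euclidean norm used in Section~\ref{sec:vectors}. Let $S$ denote the set of all weight functions $\zr$ on $R$ that satisfy both conditions of the lemma. By Lemma~\ref{lemma:inequalities}, membership in $S$ is equivalent to $\zr(T_{ij}) \ge 0$ for all $i,j$, $\zr(T_{i1}) = x_i$ for all $i$, and the finitely many column-sum equalities and partial-sum inequalities listed in that lemma. Each of these conditions is a closed affine or half-space constraint, so $S$ is closed and convex.

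Next I would verify that $S$ is nonempty, compact, and consists of weight vectors. For nonemptiness, take the ``translation-invariant'' candidate $\zr_0(T_{ij}) := x_i$. Its columns all have the same sum $H(x)$, and for every $k < n$ the partial sums $\sum_{i=1}^k x_i$ and $\sum_{i=1}^{k+1} x_i$ clearly satisfy the required inequalities in both directions, since the difference is just $x_{k+1} \ge 0$; hence $\zr_0 \in S$ by Lemma~\ref{lemma:inequalities}. Boundedness follows because the column-sum equalities force $\sum_i \zr(T_{ij}) = H(x)$ for every $j$, so $0 \le \zr(T_{ij}) \le H(x)$ on all of $S$. Finally, every $\zr \in S$ satisfies $\zr(T_{i1}) = x_i$ for all $i$, and since $x$ is a weight vector at least one $x_i > 0$, so $\zr$ viewed as an element of $\bR^{nm}$ is a weight vector in the sense of Section~\ref{sec:vectors}.

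With $S$ a nonempty compact convex subset of $\cW \subset \bR^{nm}$, Lemma~\ref{lemma:minimum} applies directly and yields a unique element of $S$ of minimal area. This is the desired $\zr$. There is no serious obstacle here: the only thing to be careful about is to translate the combinatorial characterization of ``sum of strictly monotonic skinny cuts'' into linear constraints so that Lemma~\ref{lemma:minimum}, which was phrased purely in terms of convex geometry of weight vectors, can be invoked verbatim.
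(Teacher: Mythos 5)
Your proof is correct and follows essentially the same route as the paper's: both use Lemma~\ref{lemma:inequalities} to realize the admissible set as a compact convex subset of $\bR^{nm}$, exhibit the same candidate $\zr_0(T_{ij})=x_i$ to show it is nonempty, and then invoke Lemma~\ref{lemma:minimum}. Your write-up is somewhat more careful about the boundedness and the check that the elements are genuinely weight vectors, but the substance is identical.
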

  \begin{proof} If $\zr$ is a weight function for $R$ which is a sum
of strictly monotonic skinny cuts such that $\zr(T_{i1})=x_i$ for
$i\in\{1,\ldots,n\}$, then the $\zr$-height of $R$ is $H(x)$.  Thus to
minimize area, we may restrict to weight functions whose weights are
all at most $H(x)$.  Lemma~\ref{lemma:inequalities} shows that we are
minimizing area over a compact convex subset of $\bR^{nm}$.  This
subset of $\bR^{nm}$ is nonempty because it contains the weight vector
corresponding to the weight function $\zr$ for $R$ for which
$\zr(T_{ij})=x_i$ for every $i$ and $j$.  Now
Lemma~\ref{lemma:minimum} completes the proof of
Lemma~\ref{lemma:exists}.

\end{proof}

\section{The skinny cut function }\label{sec:function}\nosubsections

We maintain the setting of Section~\ref{sec:cuts},  and continue to
let $\cW$ denote the set of all weight vectors in $\bR^n$.
Lemma~\ref{lemma:exists} allows us to define a function $\zF:\cW\to
\cW$ as follows.  Let $x=(x_1,\dotsc,x_n)\in \cW$.  We apply
Lemma~\ref{lemma:exists} in the case where $m=2$, so that the
rectangle $R$ has $n$ rows and only two columns. Then there exists a
unique weight function $\zr$ for $R$ with minimal area subject to
the conditions that $\zr$ is a sum of strictly monotonic skinny cuts
and $\zr(T_{i1})=x_i$ for $i\in\{1,\ldots,n\}$.  Let
$y=(y_1,\dotsc,y_n)$ be the weight vector such that $y_i =
\zr(T_{i2})$ for $i\in\{1,\ldots,n\}$.  We set $\zF(x)=y$. This
defines $\zF$, which we call the \emph{skinny cut function}. This
section is devoted to the investigation of $\zF$.

Let $x\in \cW$, and let $h=H(x)$.  Lemma~\ref{lemma:inequalities}
implies that $H(\zF(x))=h$, so $\zF$ maps $\cW_h$ into $\cW_h$.
Moreover it is not difficult to see that if $r$ is a positive real
number, then $\zF(rx)=r\zF(x)$.

Let $h$ be a positive real number, and let $x\in \cW_h$.  From $x$
we obtain real numbers $p_0=0\le p_1\le p_2\le\cdots\le p_n=h$ by
setting $p_k=\sum_{i=1}^{k}x_i$ for $k\in\{0,\ldots,n\}$.  We denote
$(p_0,\dotsc,p_n)$ by $\zp(x)$ and we call it a \emph{weak
partition} of the closed interval $[0,h]$ as opposed to a
\emph{strict partition} in which the numbers $p_0,\dotsc,p_n$ are
required to be distinct.  We call $p_k$ the $k$th \emph{partition
point} of $x$.  This correspondence gives a bijection between
$\cW_h$ and the set of all weak partitions of $[0,h]$.

The inequalities of Lemma~\ref{lemma:inequalities} can be easily
interpreted in terms of partition points.  Let $R$ be a rectangle as
before with $n$ rows and two columns of tiles. Let $\zr$ be a weight
function on $R$ which is a sum of strictly monotonic skinny cuts.
Let $x=(x_1,\dotsc,x_n)$ and let $y=(y_1,\dotsc,y_n)$, where
$x_i=\zr(T_{i1})$ and $y_i=\zr(T_{i2})$ for each $i$. Let
$(p_0,\dotsc,p_n)=\zp(x)$, and let $(q_0,\dotsc,q_n)=\zp(y)$.
Lemma~\ref{lemma:inequalities} implies that $H(x)=H(y)$. Then,
assuming that $H(x)=H(y)$, the inequalities of
Lemma~\ref{lemma:inequalities} are equivalent to the inequalities
$q_{k-1}\le p_k\le q_{k+1}$ for every $k\in\{1,\ldots,n-1\}$.
Likewise, assuming that $H(x)=H(y)$, the inequalities of
Lemma~\ref{lemma:inequalities} are equivalent to the inequalities
$p_{k-1}\le q_k\le p_{k+1}$ for every $i\in\{1,\ldots,n-1\}$.  We
say that two weight vectors $x$ and $y$ are \emph{compatible} if
$H(x)=H(y)$ and $q_{k-1}\le p_k\le q_{k+1}$, equivalently
$p_{k-1}\le q_k\le p_{k+1}$, for every $k\in\{1,\ldots,n-1\}$, where
$(p_0,\dotsc,p_n)=\zp(x)$ and $(q_0,\dotsc,q_n)=\zp(y)$.  This gives
us the following reformulation of the definition of $\zF$, formally
stated as a lemma.

\begin{lemma}\label{lemma:reformulate}  If $x$ is a
weight vector, then $\zF(x)$ is the weight vector with minimal area
which is compatible with $x$.
\end{lemma}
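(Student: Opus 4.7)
The plan is to unwind the definition of $\zF(x)$ and recognize that the minimization problem that defines $\zF(x)$ is, after trivial bookkeeping, the same as the minimization problem in the lemma. Fix $x\in\cW$, let $R$ be the rectangle with $n$ rows and $m=2$ columns used to define $\zF(x)$, and let $\zr$ be the weight function on $R$ produced by Lemma~\ref{lemma:exists}. Write $y_i=\zr(T_{i2})$, so that $\zF(x)=y$ by definition.

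First I would observe that since $R$ has exactly two columns,
\begin{equation*}
A(\zr)=\sum_{i=1}^{n}\zr(T_{i1})^2+\sum_{i=1}^{n}\zr(T_{i2})^2 = A(x)+A(y),
\end{equation*}
so with $x$ held fixed, minimizing $A(\zr)$ is identical to minimizing $A(y)$.

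Next I would identify the constraint set. By Lemma~\ref{lemma:inequalities}, saying that a nonnegative function $\zr$ on the tiles of $R$ is a sum of strictly monotonic skinny cuts is the same as requiring $H(x)=H(y)$ together with the two chains of inequalities on partial sums; and the partition-point reformulation given in the paragraph just before the lemma translates these exactly into compatibility of $x$ with $y$. Thus the feasible set of $\zr$'s in Lemma~\ref{lemma:exists} (with $\zr(T_{i1})=x_i$ fixed) is in bijection, via $\zr\mapsto y$, with the set of weight vectors $y$ compatible with $x$.

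Combining these two observations, $y=\zF(x)$ is the weight vector of minimal area among all weight vectors compatible with $x$, and uniqueness is inherited from the uniqueness clause of Lemma~\ref{lemma:exists} (which in turn comes from Lemma~\ref{lemma:minimum}). There is no real obstacle here; the only thing to be careful about is making sure the two directions of the partition-point translation and the equality $H(x)=H(y)$ are stated precisely so that the bijection between feasible $\zr$ and compatible $y$ is clean.
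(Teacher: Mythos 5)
Your proposal is correct and matches the paper's reasoning: the paper gives no separate proof of this lemma, presenting it as an immediate reformulation of the definition of $\zF$ via Lemma~\ref{lemma:exists} (with $m=2$) together with the partition-point translation of Lemma~\ref{lemma:inequalities}, which is exactly what you have made explicit. The observation that $A(\zr)=A(x)+A(y)$ with $A(x)$ fixed, plus the bijection between feasible $\zr$ and weight vectors compatible with $x$, is precisely the intended bookkeeping.
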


Let $x=(x_1,\dotsc,x_n)\in \cW$, and let $(p_0,\dotsc,p_n)=\zp(x)$.
Let $k\in\{1,\ldots,n-1\}$.  We call $p_k$ a \emph{left leaner} for
$x$ if $x_k>x_{k+1}$, and we call $p_k$ a \emph{right leaner} for
$x$ if $x_k<x_{k+1}$.  The rest of this paragraph explains this
terminology.  Suppose that $p_k$ is a left leaner for $x$ for some
$k\in\{1,\ldots,n-1\}$.  Then $x_k>x_{k+1}$.  The function
$f(t)=(t-p_{k-1})^2+(p_{k+1}-t)^2$ is convex on the closed interval
$[p_{k-1},p_{k+1}]$ with a minimum at the interval's midpoint.  This
and the inequality $x_k>x_{k+1}$ imply that decreasing $p_k$ while
fixing all other partition points of $x$ decreases the area of $x$.
So we view $p_k$ as leaning left toward a position of less area for
$x$.  A similar discussion holds for right leaners.

Lemma~\ref{lemma:reformulate} and the previous paragraph imply for
every $x\in \cW$ that $A(\zF(x))<A(x)$ unless $x$ has no leaners.  But
if $x$ has no leaners, then $x=w_h$, where $h=H(x)$.
Corollary~\ref{cor:wh} implies that $\zF(w_h)=w_h$.  Thus for every
positive real number $h$ the restriction of $\zF$ to $\cW_h$ has a
unique fixed point, namely, $w_h$.  We formally state some of the
results just proved in the following lemma.

\begin{lemma}\label{lemma:reducearea}  For every positive
real number $h$ the restriction of $\zF$ to $\cW_h$ is a function
$\zF\big|_{\cW_h}\co \cW_h\to \cW_h$ which reduces areas of weight
vectors except for at the unique fixed point, $w_h$.
\end{lemma}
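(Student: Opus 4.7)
The plan is to combine Lemma~\ref{lemma:reformulate} with the leaner analysis developed in the paragraph preceding the lemma. First I would observe that any $x \in \cW_h$ is trivially compatible with itself, since the conditions $q_{k-1} \le p_k \le q_{k+1}$ reduce to $p_{k-1} \le p_k \le p_{k+1}$, which hold by definition of $\zp(x)$. By Lemma~\ref{lemma:reformulate} this already gives $\zF(x) \in \cW_h$ (since $H(\zF(x)) = H(x) = h$) and $A(\zF(x)) \le A(x)$.

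Next I would dispose of the fixed point. If $x = w_h$, then since $w_h$ is the unique minimizer of $A$ on the entire set $\cW_h$ by Corollary~\ref{cor:wh}, and every vector compatible with $w_h$ lies in $\cW_h$, the minimum of $A$ over the set of vectors compatible with $w_h$ must be attained at $w_h$ itself. Hence $\zF(w_h) = w_h$.

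The main step is to show that $A(\zF(x)) < A(x)$ strictly whenever $x \ne w_h$. In this case not all components of $x$ are equal, so there exists $k \in \{1,\dotsc,n-1\}$ with $x_k \ne x_{k+1}$; that is, the partition point $p_k$ is either a left or a right leaner. Assuming (say) a left leaner, define $y$ by $y_k = x_k - \ze$, $y_{k+1} = x_{k+1} + \ze$, and $y_i = x_i$ for all other $i$, where $0 < \ze \le x_k$. Then $y$ is a weight vector with $H(y) = H(x) = h$, its partition points agree with those of $x$ except $q_k = p_k - \ze \in [p_{k-1}, p_{k+1}]$, and a direct check shows $y$ is compatible with $x$. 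The convexity argument from the paragraph before the lemma (based on $f(t) = (t - p_{k-1})^2 + (p_{k+1} - t)^2$) shows that for all sufficiently small $\ze > 0$ we have $A(y) < A(x)$. Therefore $A(\zF(x)) \le A(y) < A(x)$, as required.

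The only real obstacle is verifying that the perturbed vector $y$ actually lies in the compatibility set of $x$; this is routine because only one partition point moves, by an amount bounded by $x_k$, so the two affected inequalities $q_{k-1} \le p_k$ and $p_k \le q_{k+1}$ remain intact, and the new components stay nonnegative. Once this is in hand the lemma follows immediately from Lemma~\ref{lemma:reformulate}.
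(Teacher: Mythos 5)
Your proof is correct and follows essentially the same route as the paper: the paper also derives this lemma from Lemma~\ref{lemma:reformulate}, the leaner/convexity perturbation argument (decreasing a left-leaning partition point of a self-compatible $x$ to produce a compatible $y$ with $A(y)<A(x)$), and Corollary~\ref{cor:wh} for the fixed point $w_h$. Nothing further is needed.
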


Now let $x$ and $y$ be compatible weight vectors with
$(p_0,\dotsc,p_n)=\zp(x)$ and $(q_0,\dotsc,q_n)=\zp(y)$.  Suppose that
$q_k$ is a left leaner for $y$ for some $k\in\{1,\ldots,n-1\}$.  We
say that $x$ \emph{blocks} $q_k$ if $q_k=p_{k-1}$.  The motivation
behind this terminology is that for $x$ and $y$ to be compatible the
inequality $q_k\ge p_{k-1}$ must be satisfied, and so even though
$q_k$ is leaning left, $x$ prevents us from decreasing $q_k$
to decrease the area of $y$.  Similarly, if $q_k$ is a right leaner for
$y$, then we say that $x$ blocks $q_k$ if $q_k=p_{k+1}$.

\begin{lemma}\label{lemma:leaner}  Let $x$ and $y$ be
compatible weight vectors.  Then every left leaner for $y$ which is
blocked by $x$ is a left leaner for $x$.  Similarly, every right
leaner for $y$ which is blocked by $x$ is a right leaner for $x$.
\end{lemma}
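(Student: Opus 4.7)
The plan is to translate every hypothesis into an inequality on the partition points $p_i$ and $q_i$ and to read off the desired conclusion the same way. For the left half, the blocking identity $q_k=p_{k-1}$ locates the point in question, so saying that it is a left leaner for $x$ amounts to $x_{k-1}>x_k$, equivalently $2p_{k-1}>p_{k-2}+p_k$; the left-leaner hypothesis $y_k>y_{k+1}$ rewrites as $2q_k>q_{k-1}+q_{k+1}$. Before starting the main estimate I would dispose of the boundary case $k=1$: blocking forces $q_1=p_0=0$, hence $y_1=0$, contradicting $y_1>y_2\ge0$; so only $k\ge2$ need be considered.

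The core is a single chain of inequalities. The reformulation $p_{j-1}\le q_j\le p_{j+1}$ of compatibility stated in Section~\ref{sec:function}, applied at $j=k-1$ and at $j=k$, yields $p_{k-2}\le q_{k-1}$ (valid for $k\ge2$) and $p_k\le q_{k+1}$ (valid for $k\le n-1$, with $p_{n-1}\le q_n$ automatic from $H(x)=H(y)$). Adding these two and inserting the strict leaner inequality produces
  \begin{equation*}
p_{k-2}+p_k\le q_{k-1}+q_{k+1}<2q_k=2p_{k-1},
  \end{equation*}
which is exactly $x_{k-1}>x_k$, so $p_{k-1}$ is a left leaner for $x$, as required.

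For the right leaner half, blocking means $q_k=p_{k+1}$, and the symmetric boundary case $k=n-1$ is excluded by $y_n=0$ against $y_{n-1}<y_n$. For $k\le n-2$ I would use the other form $q_{k-1}\le p_k\le q_{k+1}$ of compatibility at indices $k$ and $k+2$ (with $p_n\ge q_{n-1}$ automatic from $H$-equality when $k=n-2$) to obtain $p_k\ge q_{k-1}$ and $p_{k+2}\ge q_{k+1}$; together with $q_{k-1}+q_{k+1}>2q_k=2p_{k+1}$ from the right-leaner inequality this gives $p_k+p_{k+2}>2p_{k+1}$, i.e., $x_{k+1}<x_{k+2}$, showing that $p_{k+1}$ is a right leaner for $x$. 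I do not expect a real obstacle: the whole content is that the strict leaner inequality on $y$ is precisely what compatibility pays for on the $p$-side, and the only care needed is with the boundary indices.
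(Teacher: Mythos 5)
Your proof is correct and is essentially the paper's argument: both rest on combining the two compatibility inequalities $q_{k-1}\ge p_{k-2}$ and $q_{k+1}\ge p_k$ with the blocking identity $q_k=p_{k-1}$ and the strict leaner inequality, you merely phrasing it additively in partition points where the paper writes $x_{k-1}\ge y_k>y_{k+1}\ge x_k$ in components. Your explicit disposal of the boundary indices $k=1$ and $k=n-1$ is a small point of extra care that the paper leaves implicit.
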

  \begin{proof} Suppose that $x=(x_1,\dotsc,x_n)$,
$y=(y_1,\dotsc,y_n)$, $\zp(x)=(p_0,\dotsc,p_n)$, and
$\zp(y)=(q_0,\dotsc,q_n)$.  Suppose that $q_k$ is a left leaner for
$y$ blocked by $x$ for some $k\in\{1,\ldots,n-1\}$.  This means that
$y_k>y_{k+1}$ and that $q_k=p_{k-1}$.  Since $q_{k-1}\ge p_{k-2}$,
we have $y_k\le x_{k-1}$.  Since $q_{k+1}\ge p_k$, we have
$y_{k+1}\ge x_k$.  Combining these inequalities, we obtain that
$x_{k-1}\ge y_k>y_{k+1}\ge x_k$, and so $p_{k-1}$ is a left leaner
for $x$. Hence every left leaner for $y$ which is blocked by $x$ is
a left leaner for $x$. A similar argument holds for right leaners.

This proves Lemma~\ref{lemma:leaner}.

\end{proof}

\begin{lemma}\label{lemma:leaner'}  Let $x\in \cW$.  Then
every left leaner for $\zF(x)$ is a left leaner for $x$ blocked by
$x$, and every right leaner for $\zF(x)$ is a right leaner for $x$
blocked by $x$.  Conversely, if $y$ is a weight vector compatible with
$x$ such that every leaner for $y$ is blocked by $x$, then $y=\zF(x)$.
\end{lemma}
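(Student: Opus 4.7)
The plan is to handle the two implications separately, using the minimality characterization of $\zF(x)$ from Lemma~\ref{lemma:reformulate}. For the forward direction, let $y = \zF(x)$ with $\zp(y) = (q_0,\ldots,q_n)$. Lemma~\ref{lemma:leaner} already promotes a blocked leaner of $y$ to a leaner of $x$, so what remains is to show every leaner of $y$ is blocked by $x$. I would argue by contradiction via a one-coordinate perturbation: suppose $q_k$ is an unblocked left leaner. Then $y_k > y_{k+1} \ge 0$ forces $y_k > 0$ and $q_k > q_{k-1}$, while the unblocked assumption gives $q_k > p_{k-1}$; thus for small $\epsilon > 0$, replacing $q_k$ by $q_k - \epsilon$ produces a new weight vector $y'$ still compatible with $x$, and a direct expansion yields $A(y') - A(y) = 2\epsilon(y_{k+1} - y_k) + 2\epsilon^2 < 0$, contradicting the minimality of $A(y)$. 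The right-leaner case is symmetric.

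For the converse, suppose $y$ is compatible with $x$ and every leaner of $y$ is blocked by $x$. By Lemma~\ref{lemma:minimum} applied to the (compact convex) set of weight vectors compatible with $x$, it suffices to show $A(y) \le A(\zF(x))$; strict uniqueness of the minimizer then forces $y = \zF(x)$. The plan is to study the function $f(t) = A(y + t(\zF(x) - y))$ on $[0,1]$: it is a convex quadratic in $t$, so $f(1) \ge f(0)$ provided $f'(0) \ge 0$. Writing $\zp(\zF(x)) = (r_0,\ldots,r_n)$ and setting $u_k = q_k - r_k$ (with $u_0 = u_n = 0$), a short Abel summation gives
\begin{equation*}
f'(0) \;=\; 2\, y \cdot (\zF(x) - y) \;=\; 2\sum_{k=1}^{n-1} (y_{k+1} - y_k)\, u_k.
\end{equation*}
The hypothesis then pins the sign of each summand: if $y_k = y_{k+1}$ the term vanishes; if $q_k$ is a left leaner, the blocking hypothesis gives $q_k = p_{k-1}$ while compatibility of $\zF(x)$ with $x$ forces $r_k \ge p_{k-1}$, so $u_k \le 0$ and $y_{k+1} - y_k < 0$, making the product nonnegative; the right-leaner case is dual.

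The main obstacle I expect is locating the right first-order test. The leaner-blocking hypothesis is strictly local at each interior partition point, while the conclusion $A(y) \le A(\zF(x))$ is global, and several of the box-constraints for compatibility may be active simultaneously. The clean move is not to perturb $y$ in individual coordinates but to differentiate $A$ along the entire segment from $y$ to the candidate optimum $\zF(x)$; Abel summation then regroups the derivative into a sum indexed by the interior partition points, and the blocking hypothesis on $y$ and compatibility of $\zF(x)$ with $x$ line up to control each summand's sign.
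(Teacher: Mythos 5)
Your proposal is correct and follows essentially the same route as the paper: the forward direction is the same perturb-an-unblocked-leaner argument followed by Lemma~\ref{lemma:leaner}, and the converse analyzes the area along the segment from $y$ to $\zF(x)$ exactly as the paper does, with your Abel-summation identity making precise the paper's informal claim that moving blocked leaners and nonleaners toward $\zp(\zF(x))$ cannot decrease area to first order. The only cosmetic difference is the endgame: the paper concludes by contradiction with Lemma~\ref{lemma:decreasing}, whereas you use convexity of the quadratic to get $A(y)\le A(\zF(x))$ and then invoke uniqueness of the minimizer; both are valid.
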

  \begin{proof} Let $q$ be a left leaner for $\zF(x)$.  Then $q$ is
blocked by $x$, for otherwise it is possible to decrease $A(\zF(x))$
by decreasing $q$.  Thus every left leaner for $\zF(x)$ is blocked by
$x$.  Now Lemma~\ref{lemma:leaner} implies that every left leaner for
$\zF(x)$ is a left leaner for $x$.  Similarly, every right leaner for
$\zF(x)$ is a right leaner for $x$ blocked by $x$.

Now suppose that $y$ is a weight vector compatible with $x$ such that
every leaner for $y$ is blocked by $x$.  Suppose that $y\ne \zF(x)$.
All weight vectors on the line segment from $y$ to $\zF(x)$ are
compatible with $x$.  Lemma~\ref{lemma:decreasing} implies that the
area function restricted to this line segment is strictly decreasing
at $y$.  But as we move the partition points of $y$ linearly toward
the partition points of $\zF(x)$ we either move leaners away from
blocked positions, which increases area, or we move nonleaners, which
also increases area.  Thus $y=\zF(x)$.

This proves Lemma~\ref{lemma:leaner'}.

\end{proof}

We next introduce the notion of segments.  Let $x\in \cW_h$ for some
positive real number $h$, and let $\zp(x)=(p_0,\dotsc,p_n)$.  Let
$i$ and $j$ be indices such that $p_i$ is either 0 or a leaner of
$x$, $p_j$ is either $h$ or a leaner of $x$, $i<j$ and $p_k$ is not
a leaner of $x$ if $i<k<j$.  Then $x_{i+1}=\cdots=x_j$, $x_i\ne
x_{i+1}$ if $i>0$, and $x_{j+1}\ne x_j$ if $j<n$.  We call
$(x_{i+1},\dotsc,x_j)$ a \emph{segment} of $x$.  In other words,
the leaners of $x$ parse $x$ into segments such that the coordinates
of $x$ in every segment are equal, and segments are maximal with
respect to this property. We call $p_i$ and $p_j$ the
\emph{endpoints} of the segment $(x_{i+1},\dotsc,x_j)$.  By the
\emph{value} of a segment we mean the value of any component of
$x$ in that segment.  By the \emph{dimension} of a segment we mean
the number of components in it.  By the \emph{height} of a segment
we mean the sum of its components.  Each endpoint of a segment is
either a leaner or an endpoint of the interval $[0,h]$.  If the left
endpoint of a segment is a left leaner, then we say that it
\emph{leans away from} the segment, and if it is a right leaner,
then we say that it \emph{leans toward} the segment.  The
situation is similar for right endpoints.

Let $y$ be a weight vector in the image of $\zF$. A weight vector
$x$ is called a \emph{minimal preimage} of $y$ if it satisfies the
following:
\begin{enumerate}
  \item[i)] $\zF(x)=y$.
  \item[ii)] If $x'$ is a weight vector with $\zF(x')=y$, then $A(x)\le
   A(x')$, with equality if and only if $x=x'$.
\end{enumerate}

\begin{lemma}\label{lemma:preimage}  Let $y$ be a weight
vector,  and let $t_1,\dotsc,t_m$ be the segments of $y$ in order.
Then $y$ is in the image of $\zF$ if and only if there is a weight
vector $x$ with segments $s_1,\dotsc,s_m$ in order which satisfies
the following:
\begin{enumerate}
  \item The height of $s_i$ equals the height of $t_i$ for
   $i\in\{1,\ldots,m\}$.
  \item For every $i\in\{1,\ldots,m\}$ the dimension of $s_i$ is the
   dimension of $t_i$ plus $\za_i-\zt_i$ where $\za_i$ is the number of
   endpoints of $t_i$ which lean away from $t_i$ and $\zt_i$ is the
   number of endpoints of $t_i$ which lean toward $t_i$.
\end{enumerate}
Furthermore, if $y$ is in the image of $\zF$ then this vector $x$ is
unique and is the minimal preimage of $y$.
\end{lemma}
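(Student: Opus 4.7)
The plan is to construct $x$ explicitly from the segment data of $y$ and then invoke Lemma~\ref{lemma:leaner'} to identify preimages. Writing $d_i$, $h_i$, $v_i=h_i/d_i$ for the dimension, height, and value of $t_i$, set $d'_i = d_i + \za_i - \zt_i$ and $w_i = h_i/d'_i$, and define $x$ to be the vector whose $i$-th block of length $d'_i$ is constantly $w_i$. This candidate is well-defined precisely when every $d'_i\ge 1$, which I will show is exactly the criterion for $y$ to be in the image of $\zF$.

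First I would verify that this $x$ has the claimed segment structure. Each internal leaner of $y$ sitting between $t_i$ and $t_{i+1}$ contributes to one of $\za,\zt$ on each side with opposite signs (a left leaner adds $1$ to $\zt_i$ and $1$ to $\za_{i+1}$; a right leaner is symmetric), so $\sum d'_i = \sum d_i = n$ and $x\in\bR^n$. Moreover, a left leaner between $t_i$ and $t_{i+1}$ has $v_i>v_{i+1}$ while its contributions decrease $d'_i$ by $1$ and increase $d'_{i+1}$ by $1$, whence $w_i \ge v_i > v_{i+1} \ge w_{i+1}$; the symmetric inequality holds for right leaners. Thus the $m$ blocks of $x$ are indeed its segments, with leaners in the same directions as those of $y$.

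For the ``$\Leftarrow$'' direction I would prove $\zF(x)=y$ by Lemma~\ref{lemma:leaner'}. Let $K_0=0<K_1<\cdots<K_m=n$ be the leaner indices of $y$ and $K'_i=\sum_{j\le i}d'_j$ the corresponding indices for $x$. Matching segment heights yields $p_{K'_i}=q_{K_i}$, and checking the $\za,\zt$ definitions gives $K'_i=K_i-1$ if $q_{K_i}$ is a left leaner and $K'_i=K_i+1$ if it is a right leaner, which is exactly the blocking condition. Compatibility $q_{k-1}\le p_k\le q_{k+1}$ then follows by a case analysis on which segment contains $k$: within each segment both $p_k$ and $q_k$ are affine in $k$ with slopes $w_i$ and $v_i$, and the inequalities $w_i\le v_i$ at ``valleys'' ($d'_i>d_i$) and $w_i\ge v_i$ at ``peaks'' ($d'_i<d_i$), combined with the pinning at the shifted indices $K'_i$, ensure that the partition points of $x$ never drift by more than one index from those of $y$.

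For the ``$\Rightarrow$'' direction and the uniqueness/minimality claim, let $y=\zF(x^*)$ for any preimage $x^*$. Lemma~\ref{lemma:leaner'} forces $p^*_{K_i\pm 1}=q_{K_i}$ at every leaner of $y$, carving $x^*$ into $m$ consecutive regions $R_i$ of dimension $d'_i$ and height $h_i$; since the values $q_{K_i}$ strictly increase, so do the $K'_i$, giving $d'_i\ge 1$ and hence validating the construction of $x$. Strict convexity of $t\mapsto t^2$ then gives $\sum_{k\in R_i}(x^*_k)^2\ge d'_i(h_i/d'_i)^2$ with equality iff $x^*$ is constant on $R_i$; summing over $i$ yields $A(x^*)\ge A(x)$ with equality iff $x^*=x$. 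I expect the main obstacle to be the compatibility verification in the ``$\Leftarrow$'' direction, where the shifted-index bookkeeping near segment boundaries must be tracked across all subcases of leaner directions (left versus right on each side, plus boundary segments where one of $\delta^L_i,\delta^R_i$ vanishes).
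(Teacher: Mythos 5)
Your proposal is correct and takes essentially the same route as the paper's proof: build $x$ explicitly from the segment heights and the adjusted dimensions $d_i+\za_i-\zt_i$, verify $\zF(x)=y$ via the blocking criterion of Lemma~\ref{lemma:leaner'}, observe that the blocked leaners carve any preimage $x^*$ into blocks of exactly these dimensions, and get minimality from the convexity of $t\mapsto t^2$ (Corollary~\ref{cor:wh}). One small caveat: your justification that the $K'_i$ strictly increase (``the values $q_{K_i}$ strictly increase'') fails when a segment of $y$ has value $0$; but since weights are nonnegative, such a segment has no endpoint leaning toward it, so $d'_i\ge d_i\ge 1$ holds there for free.
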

\begin{proof} We first suppose that $y$ is in the image of $\zF$, and
let $w$ be a weight vector such that $\zF(w)=y$. Suppose that the
right endpoint of $t_1$ leans toward $t_1$, namely, that it is a
left leaner.  Suppose that the right endpoint of $t_1$ is partition
point $k$ of $y$.  Lemma~\ref{lemma:leaner'} implies that this left
leaner is blocked by $w$, and so it is partition point number $k-1$
of $w$.  This proves that if the right endpoint of $t_1$ leans
toward $t_1$, then the dimension of $t_1$ is at least 2. Similarly,
if the left endpoint of the last segment $t_m$ leans toward $t_m$,
then the dimension of $t_m$ is at least 2.  More generally, if both
endpoints of some segment of $y$ lean toward that segment, then the
dimension of that segment is at least 3. With the results of this
paragraph and the fact that $\sum_{i=1}^{m}(\za_i-\zt_i)=0$, we see
that conditions 1 and 2 in the statement of
Lemma~\ref{lemma:preimage} uniquely determine a weight vector $x$.

For the converse, suppose that $x$ is a weight vector with segments
$s_1,\dotsc,s_m$ in order which satisfies conditions 1 and 2 of the
lemma. An induction argument on $i$ shows that the endpoints of
$t_i$ other than 0 and $h$ are blocked by $x$ and that $x$ is
compatible with $y$. By Lemma~\ref{lemma:leaner'}, $\zF(x) = y$ and
so $y$ is in the image of $\zF$.

For the last statement, suppose that $y$ is in the image of $\zF$.
Let $x$ be the weight vector which satisfies conditions 1 and 2 of
the lemma, and let $x'$ be a weight vector with $\zF(x')=y$. Let
$i\in\{1,\ldots,m\}$. Because the endpoints of $t_i$ other than 0
and $h$ are blocked by $x'$, the number of components of $x'$
between these endpoints equals the number of components of $x$
between these endpoints, namely, the dimension of $s_i$.  But since
the components of $s_i$ are all equal, Corollary~\ref{cor:wh}
implies that the area of $s_i$ is at most the sum of the squares of
the corresponding components of $x'$. Thus $A(x)\le A(x')$, and
equality holds if and only if $x=x'$.

This proves Lemma~\ref{lemma:preimage}.
\end{proof}

This section has thus far been concerned with the definition of
$\zF$ and basic relationships between a weight vector and its image
under $\zF$.  We now turn to area estimates.  The next result,
Theorem~\ref{thm:twosegs}, prepares for Theorem~\ref{thm:estimate},
which is the key ingredient in our proof of the dumbbell theorem.

\begin{thm}\label{thm:twosegs}  Let $h$ be a positive real
number, and let $x\in \cW_h$.  Then there exists a weight vector
$w\in \cW_h$ satisfying the following conditions.
\begin{enumerate}
  \item Both $w$ and $\zV(w)$ have at most two segments with nonzero
        values. Furthermore, if they have two segments with nonzero
        values, then these segments are consecutive.
  \item The weight vector $w$ is the minimal preimage of $\zV(w)$.
  \item $A(w)\le A(x)$.
  \item $A(\zV(w))\ge A(\zV(x))$.
\end{enumerate}
\end{thm}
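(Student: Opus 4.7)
The plan is to first normalize $x$ by replacing it with the minimal preimage of its image, and then to exhibit an explicit simple candidate for $w$. By Lemma~\ref{lemma:preimage} applied to $\zV(x)$, the minimal preimage $x' := m(\zV(x))$ exists and is unique, where $m$ denotes the minimal-preimage map from Lemma~\ref{lemma:preimage}. By construction $\zV(x')=\zV(x)$, so $A(\zV(x'))=A(\zV(x))$; and $A(x')\le A(x)$ by the minimality clause in Lemma~\ref{lemma:preimage}, since $x$ is itself a preimage of $\zV(x)$. Moreover $x'$ is itself the minimal preimage of $\zV(x')$, so condition~(2) holds automatically. Replacing $x$ by $x'$ therefore only strengthens conditions~(3) and~(4), and without loss of generality I assume from now on that $x$ is itself the minimal preimage of $\zV(x)$.

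Next I use a structural observation from Lemma~\ref{lemma:preimage}: the segments of $m(y)$ are in order-preserving bijection with the segments of $y$, with matching heights. In particular, nonzero segments of $y$ correspond to nonzero segments of $m(y)$, and the property ``at most two nonzero segments, which are consecutive'' passes from $y$ to $m(y)$. Hence it suffices to exhibit a $y\in\cW_{h}$ in the image of $\zV$ having at most two nonzero consecutive segments and satisfying $A(y)\ge A(\zV(x))$ and $A(m(y))\le A(x)$; then $w:=m(y)$ meets all four conditions. If $\zV(x)$ already has this form, we are done with $w=x$. Otherwise, the natural first candidate is a single block at the left boundary: $y=(v,\ldots,v,0,\ldots,0)$ with $v=h/d$ in the first $d$ positions, $2\le d<n$. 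A direct application of Lemma~\ref{lemma:preimage} shows that $m(y)$ is a single nonzero block of value $h/(d-1)$ in the first $d-1$ positions, so $A(y)=h^{2}/d$ and $A(m(y))=h^{2}/(d-1)$. The area requirements then collapse into the single interval condition
\[
1+\frac{h^{2}}{A(x)}\le d\le \frac{h^{2}}{A(\zV(x))},
\]
and whenever this range contains an integer in $\{2,\ldots,n-1\}$ we are done.

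The main obstacle is the case in which this single-block interval contains no integer. For that case I would turn to a two-block candidate $y=(0,\ldots,0,v_{1},\ldots,v_{1},v_{2},\ldots,v_{2},0,\ldots,0)$ with $d_{1}v_{1}+d_{2}v_{2}=h$ and $v_{1}\ne v_{2}$, allowed to be placed at a boundary or in the interior. Computing $A(m(y))$ via Lemma~\ref{lemma:preimage} requires a case split on whether $v_{1}>v_{2}$ (producing a left leaner between the blocks) or $v_{1}<v_{2}$ (producing a right leaner), since these yield different $\alpha_{i}-\tau_{i}$ adjustments on the blocks and hence different formulas for the dimensions and values of the segments of $m(y)$. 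I expect the heart of the proof to be a continuity and intermediate-value argument: as $(d_{1},d_{2},v_{1},v_{2})$ vary continuously subject to the height constraint, the point $(A(y),A(m(y)))$ traces a curve in the plane; Lemma~\ref{lemma:minimum} ensures existence of optima on the resulting compact convex feasible sets, and one argues that the curve must meet the target region defined by $A(y)\ge A(\zV(x))$ and $A(m(y))\le A(x)$. Carrying out the enumeration of subcases (boundary versus interior placement, and the two leaner orientations) and verifying the coverage of the required region cleanly is, I expect, the delicate technical part of the argument.
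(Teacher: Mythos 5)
Your opening reduction is sound: replacing $x$ by the minimal preimage of $\zV(x)$ costs nothing, and by Lemma~\ref{lemma:preimage} the segments of $m(y)$ match those of $y$ in order and height, so it does suffice to produce a $y$ in the image of $\zV$ with at most two consecutive nonzero segments, $A(y)\ge A(\zV(x))$ and $A(m(y))\le A(x)$. But the proof stops exactly where the theorem becomes hard. The single-block family only realizes the discrete set of pairs $(A(y),A(m(y)))=(h^2/d,\,h^2/(d-1))$, and the required window is empty for a large open set of inputs: if $x$ is close to $w_h$ but $\zV(x)\ne w_h$, then $A(x)<h^2/(n-1)$ forces $1+h^2/A(x)>n$ while $A(\zV(x))>h^2/n$ forces $h^2/A(\zV(x))<n$, so no admissible $d$ exists. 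For the remaining case you offer only a plan. That plan cannot be waved through as a routine intermediate-value argument: the achievable pairs $(A(y),A(m(y)))$ over two-block configurations form a finite union of one-parameter arcs indexed by the discrete data (dimensions $d_1,d_2$, boundary versus interior placement, leaner orientation), not a single connected curve, and whether this union actually meets the quadrant $\{A(y)\ge A(\zV(x)),\ A(m(y))\le A(x)\}$ for every $x$ is precisely the content of the theorem. Nothing in your sketch establishes this coverage, and Lemma~\ref{lemma:minimum} (existence of area minimizers on compact convex sets) does not bear on it.

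For comparison, the paper avoids constructing $y$ explicitly. It minimizes area over $W=\{z\in\cW_h: A(z)\le A(x),\ A(\zF(z))\ge A(\zF(x))\}$, shows a minimizer $w$ exists and is automatically the minimal preimage of $\zV(w)$, and then proves $w$ has no nonconsecutive nonzero segments by exhibiting, whenever it does, a perturbation staying in $W$ with strictly smaller area. The case of three or more nonzero segments requires the delicate tangency analysis of two ellipses in the plane $x+y+z=0$, resolved by a linear-dependence computation showing the constrained quadratic $f$ strictly decreases along the level set of $F$. Some argument of comparable substance is unavoidable in your framework too; as written, the proposal establishes the theorem only when the single-block interval $[1+h^2/A(x),\,h^2/A(\zV(x))]$ contains an integer in $\{2,\dotsc,n\}$.
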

  \begin{proof} Set
  \begin{equation*}
W=\{z\in \cW_h:A(z)\le A(x)\text{ and }A(\zF(z))\ge A(\zF(x))\}.
  \end{equation*}
Since every element of $W$ satisfies conditions 3 and 4, it suffices
to prove that $W$ contains a weight vector satisfying conditions 1
and 2.  We will do this by first proving that $W$ contains an
element with minimal area. It follows that $w$ is the minimal
preimage of $\zF(w)$, giving condition 2. We then prove that $w$
satisfies condition 1.

In this paragraph we prove that $W$ contains an element with minimal
area.  Since $x\in W$, $W$ is not empty.  The set $\cW_h$ is compact,
and so the closure $\overline{W}$ of $W$ in $\cW_h$ is compact.  Since
the area function is continuous, there exists a weight vector $w$ in
$\overline{W}$ with minimal area.  Because the area function is
continuous and $w\in \overline{W}$, we have that $A(w)\le A(x)$.  It
remains to prove that $A(\zF(w))\ge A(\zF(x))$.  For this let $w'$ be
a weight vector in $W$ near $w$.  Lemma~\ref{lemma:reformulate} shows
that $\zF(w)$ is compatible with $w$.  Because $w'$ is near $w$, there
exists a weight vector $y$ near $\zF(w)$ which is compatible with
$w'$.  Because $y$ is near $\zF(w)$ the area of $y$ is not much larger
than the area of $\zF(w)$.  Now Lemma~\ref{lemma:reformulate} shows
that the area of $\zF(w')$ is not much larger than the area of
$\zF(w)$.  Since $A(\zF(w'))\ge A(\zF(x))$, it follows that
$A(\zF(w))\ge A(\zF(x))$.  Thus $w\in W$.  This proves that $W$
contains an element with minimal area.

We fix an element $w$ in $W$ with minimal area.
Lemma~\ref{lemma:preimage} implies that $w$ is the minimal preimage
of $\zF(w)$.  To prove Theorem~\ref{thm:twosegs} it suffices to
prove that $w$ cannot have nonconsecutive segments with nonzero
values. We do this by contradiction. So suppose that $w$ has
nonconsecutive segments with nonzero values. Let $s_1,\dotsc,s_m$ be
the segments of $w$ in order with dimensions $d_1,\dotsc,d_m$,
values $v_1,\dotsc,v_m$, and heights $h_1,\dotsc,h_m$, so that
$h_i=d_iv_i$ for every $i\in\{1,\ldots,m\}$.

In this paragraph we show that $w$ has two consecutive segments with
nonzero values. We proceed by contradiction: suppose that $w$ does
not have two consecutive segments with nonzero values. Then every
other value of $w$ is $0$. Let $i\in\{1,\dotsc,m\}$ such that
$v_i\ne 0$, $v_{i+1}=0$, and $v_{i+2}\ne 0$. Because of the symmetry
with respect to the order of the components of $w$, we may, and do,
assume that $v_i\le v_{i+2}$. Let $A_0$ be the sum of the areas of
the segments of $w$ other than $s_i$, $s_{i+1}$, and $s_{i+2}$. Then
\begin{equation*}
A(w) = A_0 + \frac{h_i^2}{d_i} + \frac{h_{i+2}^2}{d_{i+2}}.
\end{equation*}
Similarly, if $B_0$ is the sum of the areas of the segments of
$\zF(w)$ other than those corresponding to $s_i$, $s_{i+1}$, and
$s_{i+2}$, then
\begin{equation*}
A(\zF(w)) = B_0 + \frac{h_i^2}{e_i} + \frac{h_{i+2}^2}{e_{i+2}},
\end{equation*}
where
\begin{equation*}
e_i =
\left\{%
\begin{array}{ll}
    d_i+1, & \hbox{if $i=1$} \\
    d_i+2, & \hbox{if $i>1$} \\
\end{array}%
\right.
\end{equation*}
and
\begin{equation*}
e_{i+2} =
\left\{%
\begin{array}{ll}
    d_{i+2}+1, & \hbox{if $i= m$} \\
    d_{i+2}+2, & \hbox{if $i< m$.} \\
\end{array}%
\right.
\end{equation*}
We construct a new weight vector $w'$ by modifying $w$ as follows.
Where $w$ has segments $s_i$, $s_{i+1}$, $s_{i+2}$ with heights
$h_i$, $0$, $h_{i+2}$ and dimensions $d_i$, $d_{i+1}$, $d_{i+2}$,
the weight vector $w'$ has segments with heights $0$, $h_{i}$,
$h_{i+2}$ and dimensions $d_i'$, $d_i + 1$, $d_{i+2}$. The heights
and dimensions of all other segments of $w'$ equal the corresponding
heights and dimensions of $w$ except that if $i>1$, then $w'$ has
one fewer segment than $w$ because segment $s_{i-1}$ of $w$ has
value $0$ and so the components of $w$ in $s_{i+1}$ must be adjoined
to $s_{i-1}$. So
\begin{equation*}
A(w') = A_0 + \frac{h_i^2}{d_i+1} + \frac{h_{i+2}^2}{d_{i+2}} < A(w)
\le A(x).
\end{equation*}
Using Lemma~\ref{lemma:preimage} we see that
\begin{equation*}
A(\zF(w')) = B_0 + \frac{h_i^2}{d_i+1} + \frac{h_{i+2}^2}{e_{i+2}}
\ge A(\zF(w)) \ge A(\zF(x)).
\end{equation*}
This contradicts the fact that $w$ is an element in $W$ with minimal
area. Thus $w$ has two consecutive segments with nonzero values. If
$w$ only has two segments with nonzero values, then we are done.
Hence we may assume that $w$ has at least three segments with
nonzero values.

Next suppose that $w'$ is a weight vector with the same number of
segments as $w$, the segments of $w'$ have the same dimensions as the
corresponding segments of $w$ but the leaners of $w'$ are gotten by
perturbing the leaners of $w$, equivalently, the heights of the
segments of $w'$ are gotten by perturbing the heights of the segments
of $w$.  If this perturbation is small enough, then
Lemma~\ref{lemma:leaner'} implies that $\zF(w')$ is gotten from
$\zF(w)$ by exactly the same perturbation.  We will prove that there
exists such a perturbation such that $A(w')<A(w)$ and
$A(\zF(w'))=A(\zF(w))$.  Thus $w'$ is an element of $W$ with smaller
area than $w$, a contradiction which will complete the proof of
Theorem~\ref{thm:twosegs}.

To begin the construction of such a perturbation, we note that the
area of $w$ is
  \begin{equation*}
\sum_{i=1}^{m}d_iv_i^2=\sum_{i=1}^{m}d_i\left(\frac{h_i}{d_i}\right)^2
=\sum_{i=1}^{m}\frac{1}{d_i}h_i^2=\sum_{i=1}^{m}a_ih_i^2,
  \end{equation*}
where $a_i=d_i^{-1}$ for every $i\in\{1,\ldots,m\}$.  Recall that $w$
is the minimal preimage of $\zV(w)$.  As above, if $t_1,\dotsc,t_m$
are the segments of $\zF(w)$ in order with dimensions
$A_1^{-1},\dotsc,A_m^{-1}$, then the area of $\zF(w)$ is
  \begin{equation*}
\sum_{i=1}^{m}A_ih_i^2.
  \end{equation*}
This leads us to define functions $f\co \bR^m\to \bR$ and $F\co
\bR^m\to \bR$ so that
  \begin{equation*}
f(x_1,\dotsc,x_m)=\sum_{i=1}^{m}a_i(x_i+h_i)^2
  \end{equation*}
and
  \begin{equation*}
F(x_i,\dotsc,x_m)=\sum_{i=1}^{m}A_i(x_i+h_i)^2.
  \end{equation*}
To prove Theorem~\ref{thm:twosegs}, it suffices to find points
$x=(x_1,\dotsc,x_m)\in \bR^m$ arbitrarily near the origin 0 such
that $\sum_{i=1}^{m}x_i=0$, $f(x)<f(0)$, $F(x)=F(0)$, and $x_i\ge 0$
if $h_i=0$.

We do this while fixing all but three components of
$(x_1,\dotsc,x_m)$.  We specify these components later in this
paragraph.  Since only three components of $(x_1,\dotsc,x_m)$ are
allowed to vary, the functions $f$ and $F$ are really functions of
three variables.  To simplify notation, we now view $f$ and $F$ as
functions from $\bR^3$ to $\bR$ with
  \begin{equation*}
f(x,y,z)=a(x+x_0)^2+b(y+y_0)^2+c(z+z_0)^2+d
  \end{equation*}
and
  \begin{equation*}
F(x,y,z)=A(x+x_0)^2+B(y+y_0)^2+C(z+z_0)^2+D.
  \end{equation*}
Since $w$ has two consecutive segments with nonzero values, by
Lemma~\ref{lemma:preimage} $\zF(w)$ has two consecutive segments
with nonzero values. Let the variable $x$ correspond to a segment
of $\zF(w)$ with nonzero value which is adjacent to a segment with
larger value. Let $z$ correspond to a segment of $\zF(w)$ with
maximal value.  Let $y$ correspond to any segment of $\zF(w)$ with
nonzero value not already chosen.  Note that $x_0, y_0, z_0 > 0$.
Because the segment of $\zF(w)$ corresponding to $x$ has an
endpoint leaning away from it, its dimension $A^{-1}$ is at most
as large as the dimension $a^{-1}$ of the corresponding segment of
$w$. Thus $A^{-1}\le a^{-1}$, and so $A\ge a$.  Because the
segment of $\zF(w)$ corresponding to $z$ has no endpoint leaning
away from it, $C^{-1}>c^{-1}$, and so $C<c$. Because the value of
the segment of $\zF(w)$ corresponding to $z$ is larger than the
value of the segment corresponding to $x$, we have $Cz_0>Ax_0$. To
prove Theorem~\ref{thm:twosegs} it suffices to find points
$(x,y,z)\in\bR^3$ arbitrarily near $(0,0,0)$ such that $x+y+z=0$,
$f(x,y,z)<f(0,0,0)$, and $F(x,y,z)=F(0,0,0)$.

The set of all solutions $(x,y,z)$ to the equation $f(x,y,z)=f(0,0,0)$
is an ellipsoid containing the point $(0,0,0)$.  The gradient of $f$
at $(0,0,0)$ is $(2ax_0,2by_0,2cz_0)$.  Because $ax_0$, $by_0$, $cz_0$
are not all equal, the plane given by $x+y+z=0$ contains the point
$(0,0,0)$, but it is not tangent to the ellipsoid.  Thus this plane
intersects the ellipsoid in an ellipse.  It likewise intersects the
ellipsoid given by $F(x,y,z)=F(0,0,0)$ in an ellipse.  These ellipses
both lie in this plane and they have a point in common.  If they
intersect transversely at $(0,0,0)$, then it is easy to find points
arbitrarily near $(0,0,0)$ which lie within the first ellipse and lie on
the second one.  In other words, Theorem~\ref{thm:twosegs} is true if
the ellipses intersect transversely.

Finally, we assume that the ellipses are tangent at $(0,0,0)$.  Let
$L$ be the line in $\bR^3$ tangent to the ellipses at $(0,0,0)$.  Then
$L$ lies in the plane given by $x+y+z=0$ and it lies in the tangent
planes to both of the ellipsoids.  These three planes have normal
vectors given by $(1,1,1)$, $(ax_0,by_0,cz_0)$ and
$(Ax_0,By_0,Cz_0)$.  Thus these three vectors are linearly dependent.
Thus the same is true for the columns of the matrix
  \begin{equation*}
\left[\begin{matrix}1 & ax_0 & Ax_0 \\ 1 & by_0 & By_0 \\ 1 & cz_0 &
Cz_0 \end{matrix}\right].
  \end{equation*}
The cross product of any two rows is orthogonal to all three rows.
The cross product of row 1 and row 3 is
  \begin{equation*}
v=((aC-Ac)x_0z_0,Ax_0-Cz_0,cz_0-ax_0).
  \end{equation*}
Since $a\le A$ and $C<c$, we have $(aC-Ac)x_0z_0<0$.  Since
$Cz_0>Ax_0$, we have $Ax_0-Cz_0<0$.  Since $v$ is orthogonal to the
rows of the preceding matrix, it is also orthogonal to the rows of
  \begin{equation*}
\left[\begin{matrix}\frac{1}{x_0} & a & A \\ \frac{1}{y_0} & b & B \\
\frac{1}{z_0} & c & C \end{matrix}\right].
  \end{equation*}
So
  \begin{equation*}
(aC-Ac)x_0z_0\left(\frac{1}{x_0},\frac{1}{y_0},\frac{1}{z_0}\right)+
(Ax_0-Cz_0)(a,b,c)+(cz_0-ax_0)(A,B,C)=(0,0,0).
  \end{equation*}
Solving for $(a,b,c)$, we find that
  \begin{equation*}
(a,b,c)=r\left(\frac{1}{x_0},\frac{1}{y_0},\frac{1}{z_0}\right)+s(A,B,C)
  \end{equation*}
for some real numbers $r$ and $s$ with $r<0$.  Viewing $f(x,y,z)$ and
$F(x,y,z)$ as dot products of vectors except for the final additive
constants, we have that
  \begin{equation*}
\begin{aligned}
f(x,y,z)&-f(0,0,0)\\
& = (a,b,c)\cdot((x+x_0)^2,(y+y_0)^2,(z+z_0)^2)-
(a,b,c)\cdot(x_0^2,y_0^2,z_0^2)\\
&  =(a,b,c)\cdot(x^2+2x_0x,y^2+2y_0y,z^2+2z_0z)\\
&  = r\left(\frac{1}{x_0},\frac{1}{y_0},\frac{1}{z_0}\right)\cdot
   (x^2+2x_0x,y^2+2y_0y,z^2+2z_0z)\\
&  \qquad+s(A,B,C)\cdot
   (x^2+2x_0x,y^2+2y_0y,z^2+2z_0z)\\
&  = r\left[\left(\frac{1}{x_0}x^2+\frac{1}{y_0}y^2+
     \frac{1}{z_0}z^2\right)+2(x+y+z)\right]+s[F(x,y,z)-F(0,0,0)]\\
&  = r\left(\frac{1}{x_0}x^2+\frac{1}{y_0}y^2+
     \frac{1}{z_0}z^2\right)+s[F(x,y,z)-F(0,0,0)].
\end{aligned}
  \end{equation*}

So if $(x,y,z)$ is any point on the ellipse corresponding to $F$, then
$F(x,y,z)-F(0,0,0)=0$ and so $f(x,y,z)-f(0,0,0)<0$, as desired.

This proves Theorem~\ref{thm:twosegs}.

\end{proof}

\begin{thm}\label{thm:estimate}  Let $h$ be a positive real
number.  Let $x_1,\dotsc,x_{3n}$ be weight vectors in $\cW_h$ such
that $A(x_{i+1})\le A(\zV(x_i))$ for every $i\in\{1,\ldots,3n-1\}$.
Then $x_{3n}=w_h$.
\end{thm}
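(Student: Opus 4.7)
The plan is to reduce the given sequence to a companion sequence of ``nice-form'' weight vectors via Theorem~\ref{thm:twosegs}, and then to show by structural analysis that this companion sequence is forced to reach $w_h$ well before index $3n$.

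First I would apply Theorem~\ref{thm:twosegs} to each $x_i$ to obtain $w_i\in\cW_h$ with at most two consecutive nonzero segments, with $w_i$ the minimal preimage of $\zV(w_i)$, and satisfying $A(w_i)\le A(x_i)$ and $A(\zV(w_i))\ge A(\zV(x_i))$.  Chaining these inequalities with the hypothesis of the theorem gives
\[
A(w_{i+1})\le A(x_{i+1})\le A(\zV(x_i))\le A(\zV(w_i)),
\]
so the companion sequence $(w_i)$ satisfies the same area condition but now with every term in nice form.  Next I would use Lemma~\ref{lemma:preimage} to describe $\zV$ on nice-form vectors: for $w=(0^{l_0},a^{d_1},b^{d_2},0^{l_1})$ with $a>b>0$, the segments of $\zV(w)$ have the same heights as those of $w$ but dimensions shifted by $\alpha_i-\tau_i$, so that typically the larger-valued segment widens (by $1$ or $2$, consuming neighboring zero segments or pushing into the smaller-valued segment) while the smaller-valued segment narrows; in the degenerate case in which the predicted values cross over, $\zV(w)=w_h$ directly.

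Using this description I would introduce a complexity measure $\sigma(w)$ on nice-form vectors---roughly the dimension of the smaller-valued nonzero segment plus the dimensions of the adjacent zero segments---with $\sigma(w_h)=0$ and $\sigma(w)\le 3n-1$ for all nice-form $w\in\cW_h$.  The key technical assertion, to be proved by case analysis combining Lemma~\ref{lemma:preimage} with Lemmas~\ref{lemma:decreasing} and~\ref{lemma:distance} and Corollary~\ref{cor:wh}, is that whenever $w'$ is nice-form and $A(w')\le A(\zV(w))$ one has $\sigma(w')\le\sigma(w)-1$.  Iterating this along the companion sequence gives $\sigma(w_{3n-1})\le 0$, so $w_{3n-1}=w_h$ and hence $\zV(w_{3n-1})=w_h$.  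Then
\[
A(x_{3n})\le A(\zV(x_{3n-1}))\le A(\zV(w_{3n-1}))=A(w_h),
\]
and Corollary~\ref{cor:wh} forces $x_{3n}=w_h$.

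The main obstacle is verifying the decrement property $\sigma(w')\le\sigma(w)-1$ under the weakened hypothesis $A(w')\le A(\zV(w))$, which is not the same as $w'=\zV(w)$: the decrement cannot simply be read off from the formula for $\zV(w)$ but must be deduced by characterizing all nice-form vectors with area at most a given target and showing that their $\sigma$-value is strictly smaller than $\sigma(w)$.  The factor of $3$ in $3n$ should emerge from the fact that $\sigma$ must accommodate three distinct modes of structural reduction---shortening the left-hand zero run, shortening the right-hand zero run, and shrinking the smaller-valued nonzero segment---each of which the weak area inequality can advance by only one unit per step.
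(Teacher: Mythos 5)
Your reduction to a companion sequence via Theorem~\ref{thm:twosegs} is exactly the paper's first step, and the chain $A(w_{i+1})\le A(x_{i+1})\le A(\zV(x_i))\le A(\zV(w_i))$ is correct. But the heart of your argument --- the decrement property $\sigma(w')\le\sigma(w)-1$ for a discrete, dimension-based potential $\sigma$ --- is false, not merely hard to verify. The only information that passes from step $i$ to step $i+1$ is the single real number $A(\zV(w_i))$, so any quantity that is forced to decrease along the companion sequence must be a monotone function of area alone (restricted to nice-form vectors). Your $\sigma$ is not: for any prescribed segment dimensions (two nonzero segments of dimensions $d$ and $n-d$, say, or any pattern with zero segments), there are nice-form vectors in $\cW_h$ realizing those dimensions whose area is arbitrarily close to the minimum $h^2/n$, obtained by placing the partition point arbitrarily close to its uniform position $hd/n$. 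Hence whenever $A(\zV(w_i))>h^2/n$ --- which holds at every step before the sequence reaches $w_h$ --- the inequality $A(w_{i+1})\le A(\zV(w_i))$ places no constraint whatsoever on the segment dimensions of $w_{i+1}$, and $\sigma(w_{i+1})$ can exceed $\sigma(w_i)$ by an arbitrary amount. The induction collapses at its first step.

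The paper's proof respects this constraint by using potentials that depend only on area. It splits into two cases. If at least $n$ of the $x_i$ have a zero-valued segment, it proves the recursion $A(z_{i+1})\le\bigl(A(z_i)^{-1}+1\bigr)^{-1}$, whence $A(z_i)\le 1/i$ and $A(z_n)\le 1/n$ forces $z_n=w_1$ by Corollary~\ref{cor:wh}; here the potential is the area itself and the decay is harmonic, not a unit decrement of an integer invariant. Otherwise at least $2n$ of the $x_i$ have two nonzero segments and no zero segment; the paper encodes each as a point of a triangle in $\bR^2$, observes that the level sets of the area form a nested family of ellipses, and defines a continuous potential $\za$ (a monotone function of the area) that drops by more than $1/n$ at each step while remaining in a bounded interval, yielding a contradiction after $2n$ steps. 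Any repair of your argument must replace $\sigma$ by such an area-monotone quantity; note also that the factor $3$ in $3n$ arises from combining the two cases ($n$ steps plus $2n$ steps), not from three modes of combinatorial reduction.
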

  \begin{proof} Lemma~\ref{lemma:reducearea} implies that $\zV$ does
not increase area, and so
  \begin{equation*}
A(x_1)\ge\cdots\ge A(x_{3n}).
  \end{equation*}
Lemma~\ref{lemma:reducearea} and Corollary~\ref{cor:wh} furthermore
imply that if two of $x_1,\dotsc,x_{3n}$ have the same area, then
$x_{3n}=w_h$.  So we may, and do, assume that $x_1,\dotsc,x_{3n}$ are
distinct.

Since $\zV(hx)=h \zV (x)$ for every weight vector $x\in \cW_1$, we may,
and do, assume that $h=1$.

Let $i\in\{1,\ldots,3n-1\}$.  Theorem~\ref{thm:twosegs} implies that
there exists a weight vector $x_i'\in \cW_1$ such that both $x'_i$ and
$\zV(x'_i)$ have at most two segments with nonzero values, the weight
vector $x'_i$ is the minimal preimage of $\zV(x'_i)$, $A(x'_i)\le
A(x_i)$ and $A(\zV(x'_i))\ge A(\zV(x_i))$.  Set $x'_{3n}=x_{3n}$.
Then
  \begin{equation*}
A(x'_{i+1})\le A(x_{i+1})\le A(\zV(x_i))\le A(\zV(x'_i))
  \end{equation*}
for every $i\in\{1,\ldots,3n-1\}$.  Thus to prove
Theorem~\ref{thm:estimate}, we may replace $x_1,\dotsc,x_{3n}$ by
$x'_1,\dotsc,x'_{3n}$.  In other words we may, and do, assume for
every $i\in\{1,\ldots,3n-1\}$ that $x_i$ and $\zV(x_i)$ do not have
nonconsecutive segments with nonzero values and that $x_i$ is the
minimal preimage of $\zV(x_i)$.

At this point the argument splits into two cases which depend on how
many of the weight vectors $x_1,\dotsc,x_{3n}$ have segments with
value 0.  In Case 1 we prove Theorem~\ref{thm:estimate} under the
assumption that at least $n$ of the weight vectors $x_1,\dotsc,x_{3n}$
have a segment with value 0.  In Case 2 we prove
Theorem~\ref{thm:estimate} under the assumption that fewer than $n$ of
the weight vectors $x_1,\dotsc,x_{3n}$ have a segment with value 0.

We begin Case 1 now.  Assume that at least $n$ of the weight vectors
$x_1,\dotsc,x_{3n}$ have a segment with value 0.  Let $z_1,\dotsc,z_n$
be distinct elements of $\{x_1,\dotsc,x_{3n}\}$ in order each of which
has a segment with value 0.  Then $A(z_{i+1})\le A(\zV(z_i))$ for every
$i\in\{1,\ldots,n-1\}$.

In this paragraph we show that in Case 1 it suffices to prove for
every $i\in\{1,\ldots,n-1\}$ that
  \begin{equation*}\linnum\label{lin:oneovern}
A(z_{i+1})(A(z_i)+1)-A(z_i)\le0.
\end{equation*}
Indeed, this inequality is equivalent to the inequality
  \begin{equation*}
A(z_{i+1})\le\frac{1}{A(z_i)^{-1}+1}.
  \end{equation*}
Since $H(z_1)=1$, we have $A(z_1)\le1$.  A straightforward induction
argument based on the preceding displayed inequality shows that
$A(z_i)\le 1/i$ for every $i\in\{1,\ldots,n\}$.  In particular
$A(z_n)\le1/n$.  But Corollary~\ref{cor:wh} implies that $w_1$ is the
unique element in $\cW_1$ with area at most $1/n$.  Hence
$z_n=x_{3n}=w_1$.  Thus to prove Theorem~\ref{thm:estimate} in Case 1
it suffices to prove line~\ref{lin:oneovern} for every
$i\in\{1,\ldots,n-1\}$.

In this paragraph we assume that there exists $i\in\{1,\ldots,n-1\}$
such that $z_i$ has only one segment with nonzero value.  Let $d$ be
the dimension of this segment.  Then $A(z_i)=1/d$.  Moreover
$\zV(z_i)$ also has just one segment with nonzero value with dimension
either $d+1$ or $d+2$.  Hence $A(\zV(z_i))$ is either $1/(d+1)$ or
$1/(d+2)$.  So line~\ref{lin:oneovern} holds.  Thus to prove
Theorem~\ref{thm:estimate} in Case 1 it suffices to prove
line~\ref{lin:oneovern} under the assumption that $z_i$ has two
segments with nonzero value.

So let $i\in\{1,\ldots,n-1\}$, and suppose that $z_i$ has two
consecutive segments $s$ and $t$ in order with nonzero value.  Let
$p$ be the right endpoint of $s$, equivalently, the left endpoint of
$t$. Let $d$ be the dimension of $s$, and let $e$ be the dimension
of $t$. The value of $s$ is $p/d$, and the value of $t$ is
$(1-p)/e$. It follows that
  \begin{equation*}\linnum\label{lin:areazi}
A(z_i)=d^{-1}p^2+e^{-1}(1-p)^2.
  \end{equation*}
Let $d'$ and $e'$ be the dimensions of the segments $s'$ and $t'$ of
$\zV(z_i)$ corresponding to $s$ and $t$.  Then
  \begin{equation*}
A(\zV(z_i))=(d')^{-1}p^2+(e')^{-1}(1-p)^2.
  \end{equation*}
By symmetry we may assume that
  \begin{equation*}
\frac{p}{d'}\le\frac{1-p}{e'}.
  \end{equation*}
The last inequality implies that $p$ is a right leaner for
$\zV(z_i)$. Hence Lemma~\ref{lemma:leaner'} implies that $p$ is a
right leaner for $z_i$ and so
  \begin{equation*}
\frac{p}{d}\le\frac{1-p}{e}.
  \end{equation*}
Combining this with line~\ref{lin:areazi} yields that
  \begin{equation*}
A(z_i)=d^{-1}p^2+e^{-1}(1-p)^2\le e^{-1}p(1-p)+e^{-1}(1-p)^2=e^{-1}(1-p).
  \end{equation*}
Similarly, we have that
  \begin{equation*}\linnum\label{lin:estvi}
A(\zV(z_i))\le(e')^{-1}(1-p).
  \end{equation*}

Because $z_i$ has a segment with value 0 and $z_i$ is the minimal
preimage of $\zV(z_i)$, it is also true that $\zV(z_i)$ has a
segment with value 0.  These facts and Lemma~\ref{lemma:preimage}
imply that if $\zV(z_i)$ has a segment with value 0 preceding $s'$,
then $d' = d$ and $e'\ge e+1$.  If $\zV(z_i)$ does not have a
segment with value 0 preceding $s'$, then $d'=d-1$ and $e'=e+2$.

Suppose that $\zV(z_i)$ has a segment with value 0 preceding $s'$.
The previous paragraph shows that $d' = d$ and $e'\ge e+1$.
Combining this with line~\ref{lin:estvi} yields that
$A(\zV(z_i))\le(e+1)^{-1}(1-p)$.  Hence
  \begin{equation*}
\begin{aligned}
A(z_{i+1})&(A(z_i)+1)-A(z_i)\\
   & \le A(\zV(z_i))(A(z_i)+1)-A(z_i)\\
  & \le A(\zV(z_i))A(z_i)+A(\zV(z_i))-A(z_i)\\
  & \le\frac{1}{e(e+1)}(1-p)^2+\frac{1}{d'}p^2+
    \frac{1}{e'}(1-p)^2-\frac{1}{d}p^2-\frac{1}{e}(1-p)^2\\
  & \le\frac{1}{e(e+1)}(1-p)^2+\frac{1}{d}p^2+
    \frac{1}{e+1}(1-p)^2-\frac{1}{d}p^2-\frac{1}{e}(1-p)^2\\
  & \le \frac{1}{e(e+1)}(1-p)^2-\frac{1}{e(e+1)}(1-p)^2\\
  & \le 0.
\end{aligned}
  \end{equation*}
Thus line~\ref{lin:oneovern} is true if $\zV(z_i)$ has a segment
with value 0 preceding $s'$.

Next assume that $\zV(z_i)$ does not have a segment with value 0
preceding $s'$.  Then the next-to-last paragraph shows that $d'=
d-1$ and $e'= e+2$.  Combining this with line~\ref{lin:estvi} yields
that $A(\zV(z_i))\le (e+2)^{-1}(1-p)$.  We also have that
  \begin{equation*}
\frac{p}{d-1}=\frac{p}{d'}\le \frac{1-p}{e'}=\frac{1-p}{e+2}.
  \end{equation*}
Hence
  \begin{equation*}
\begin{aligned}
A(z_{i+1})&(A(z_i)+1)-A(z_i)\\
  & \le A(\zV(z_i))(A(z_i)+1)-A(z_i)\\
  & \le A(\zV(z_i))A(z_i)+A(\zV(z_i))-A(z_i)\\
  & \le\frac{1}{e(e+2)}(1-p)^2+\frac{1}{d'}p^2+
    \frac{1}{e'}(1-p)^2-\frac{1}{d}p^2-\frac{1}{e}(1-p)^2\\
  & \le\frac{1}{e(e+2)}(1-p)^2+\frac{1}{d-1}p^2+
    \frac{1}{e+2}(1-p)^2-\frac{1}{d}p^2-\frac{1}{e}(1-p)^2\\
  & \le\frac{1}{e(e+2)}(1-p)^2+\frac{1}{(d-1)d}p^2-
    \frac{2}{e(e+2)}(1-p)^2\\
  & \le\frac{1}{e(e+2)}(1-p)^2+\frac{1}{e(e+2)}(1-p)^2-
    \frac{2}{e(e+2)}(1-p)^2\\
  & \le 0.
\end{aligned}
  \end{equation*}
Thus line~\ref{lin:oneovern} is true if $\zV(z_i)$ does not have a
segment with value 0 preceding $s'$.

The proof of Theorem~\ref{thm:estimate} is now complete in Case 1,
namely, Theorem~\ref{thm:estimate} is proved if at least $n$ of the
weight vectors $x_1,\dotsc,x_{3n}$ have a segment with value 0.

Now we proceed to Case 2.  Suppose that fewer than $n$ of the weight
vectors $x_1,\dotsc,x_{3n}$ have a segment with value 0.  Then at
least $2n$ of the weight vectors $x_1,\dotsc,x_{3n}$ do not have a
segment with value 0.  Let $z_1,\dotsc,z_{2n}$ be distinct elements of
$\{x_1,\dotsc,x_{3n}\}$ in order none of which has a segment with
value 0.  Then $A(z_{i+1})\le A(\zV(z_i))$ for every
$i\in\{1,\ldots,2n-1\}$.  As we have seen, to finish the proof of
Theorem~\ref{thm:estimate}, it suffices to prove that $z_{2n}= w_1$.
We do this by contradiction.  Suppose that $z_{2n}\ne w_1$.  As we
have seen, it follows that $z_i\ne w_1$ for $i\in\{1,\ldots,2n\}$.

Let $i\in\{1,\ldots,2n-1\}$.  Then $z_i$ has at most two segments,
$z_i$ has no segment with value 0, and $z_i$ is the minimal preimage
of $\zV(z_i)$.  The only weight vector in $\cW_1$ with one segment
is $w_1$, so $z_i$ has two segments neither of which has value 0.

Let the first segment of $z_i$ have right endpoint $p$ and dimension
$d$.  Then the second segment of $z_i$ has left endpoint $p$ and
dimension $n-d$.  So
  \begin{equation*}
\begin{aligned}
A(z_i) & = \frac{1}{d}p^2+\frac{1}{(n-d)}(1-p)^2  =
\frac{(n-d)p^2+d(1-p)^2}{d(n-d)}\\
 & = \frac{np^2-dp^2+d-2dp+dp^2}{d(n-d)}  =
\frac{n^2p^2-2dnp+dn}{d(n-d)n}\\
 & = \frac{(np-d)^2-d^2+dn}{d(n-d)n}  =
\frac{(d-np)^2}{d(n-d)n}+\frac{1}{n}.
\end{aligned}
  \end{equation*}
Set
  \begin{equation*}
B(z_i)=\frac{(d-np)^2}{d(n-d)}.
  \end{equation*}
Then
  \begin{equation*}
A(z_i)=\frac{1}{n}B(z_i)+\frac{1}{n}.
  \end{equation*}
Since $z_i$ has two segments neither of which has value 0, the same
is true of $\zV(z_i)$.  We define $B(\zV(z_i))$ in the same way that
we define $B(z_i)$.  Then the given inequality $A(z_{i+1})\le
A(\zV(z_i))$ is equivalent to the inequality $B(z_{i+1})\le
B(\zV(z_i))$.  We focus on this latter inequality.

Let $a$ be the positive real number such that $B(z_i)=a^2$.  Then
  \begin{equation*}
\begin{gathered}
\frac{(d-np)^2}{d(n-d)}  = a^2 \\
(d-np)^2  =a^2d(n-d) \\
d^2-nd+a^{-2}(d-np)^2  = 0\\
(d-\frac{n}{2})^2+a^{-2}(d-np)^2  = \frac{n^2}{4}\\
\left(\frac{2d}{n}-1\right)^2+a^{-2}\left(\frac{2d}{n}-2p\right)^2  = 1.
\end{gathered}
\end{equation*}

We are led to consider the family $\cE$ of all ellipses of the form
$x^2+a^{-2}y^2=1$, where $a$ is a positive real number.  The open
upper halves of the ellipses in $\cE$ fill the open half infinite
strip $S=\{(x,y)\in \bR^2:|x|<1,y>0\}$.

The weight vector $z_i$ determines the point
  \begin{equation*}
P(z_i)=(x_i,y_i)=\left(\frac{2d}{n}-1,\frac{2d}{n}-2p\right)
  \end{equation*}
in $\bR^2$, and we next show that we may assume that $P(z_i)\in S$.
It is easy to see that $|x_i|<1$, so what is needed is the inequality
$y_i>0$.  Since $z_i$ and $\zV(z_i)$ have exactly one leaner, by
symmetry we may, and do, assume that this leaner is a right leaner.
Hence
  \begin{equation*}
\begin{gathered}
\frac{1}{d}p<\frac{1}{n-d}(1-p) \\
p(n-d)<d(1-p)\\
pn<d.
\end{gathered}
  \end{equation*}
It follows that $y_i>0$, and so $P(z_i)\in S$.  It is even true that
$y_i<x_i+1$, and so $P(z_i)$ lies in the triangle $T=\{(x,y)\in
\bR^2:|x|<1,0<y<x+1\}$.

The point of $\bR^2$ corresponding to $\zV(z_i)$ likewise lies in
$T$.  Moreover since the first segment of $z_i$ has right endpoint $p$
and dimension $d$, it follows that the first segment of $\zV(z_i)$ has
right endpoint $p$ and dimension $d-1$.  In other words, the point of
$T$ corresponding to $\zV(z_i)$ is
  \begin{equation*}
\left(\frac{2(d-1)}{n}-1,\frac{2(d-1)}{n}-2p\right)=
\left(x_i-\frac{2}{n},y_i-\frac{2}{n}\right).
  \end{equation*}

Now we can reformulate our problem as follows.  Let $\zt\co \bR^2\to
\bR^2$ be the translation defined by
$\zt(x,y)=\left(x-\frac{2}{n},y-\frac{2}{n}\right)$.  We begin with a
point $t_1\in T$.  Let $E_1$ be the ellipse in $\cE$ containing
$t_1$.  We have that $\zt(t_1)\in T$ and $\zt(t_1)$ lies within
$E_1$.  Let $E'_1$ be the ellipse in $\cE$ containing $\zt(t_1)$.  Let
$t_2$ be a point of $T$ either on or within $E'_1$.  We iterate this
construction to obtain points $t_1,\dotsc,t_{2n}\in T$.  To complete
the proof of Theorem~\ref{thm:estimate} it suffices to prove that it
is impossible to construct points $t_1,\dotsc,t_{2n}$ in this way.

To this end, let $A$ be the subset of $\bR^2$ which is the union of
the closed line segment joining $(0,0)$ with $(0,1)$ and the closed
line segment joining $(0,1)$ with $(1,2)$.  See
Figure~\ref{fig:alpha}.  We define a function $\za\co T\to \bR$ as
follows.  Let $t\in T$.  There exists a unique ellipse $E\in \cE$
which contains $t$, and $A\cap E$ consists of a single point.  Let
$\za(t)$ be the $y$-coordinate of this point.

  \begin{figure}[h!]
\centerline{\includegraphics{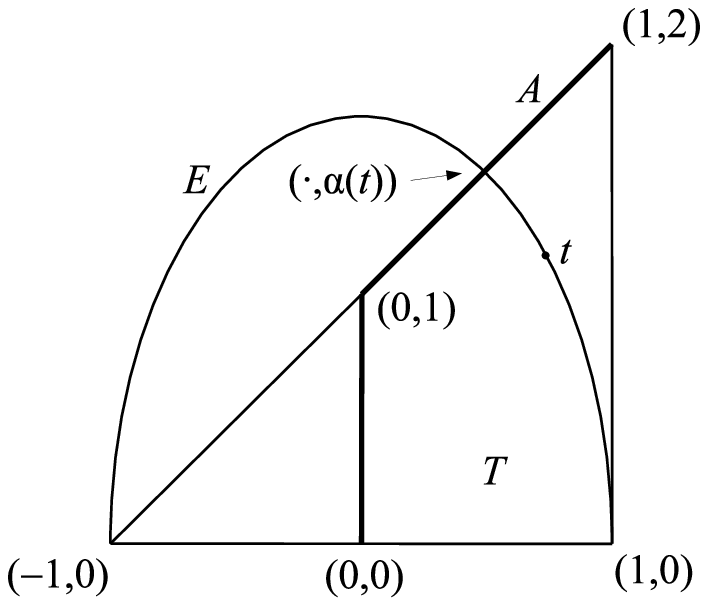}}
\caption{ Defining $\za(t)$.}
\label{fig:alpha}
  \end{figure}

We will prove that if $t_1,\dotsc,t_{2n}$ are points in $T$ as in the
next-to-last paragraph, then $\za(t_{i+1})<\za(t_i)-1/n$ for every
$i\in\{1,\ldots,2n-1\}$.  This suffices to complete the proof of
Theorem~\ref{thm:estimate}.  Because $t_{i+1}$ lies either on or within
the ellipse in $\cE$ containing $\zt(t_i)$, it actually suffices to
prove that $\za(\zt(t_i))<\za(t_i)-1/n$ for every
$i\in\{1,\ldots,2n-1\}$, which is what we do.

So let $i\in\{1,\ldots,2n-1\}$.  Let $E_i$ and $E'_i$ be the
ellipses in $\cE$ which contain $t_i$ and $\zt(t_i)$.

First suppose that the $x$-coordinates of both $t_i$ and $\zt(t_i)$
are nonnegative.  See Figure~\ref{fig:case1}.  In this case it is
clear that we actually have that $\za(\zt(t_i))<\za(t_i)-2/n$, as
desired.

  \begin{figure}[h!]
\centerline{\includegraphics{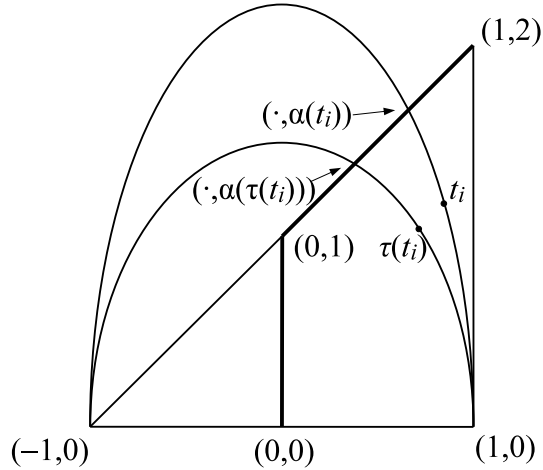}}
\caption{When the $x$-coordinates of both $t_i$ and $\zt(t_i)$
are nonnegative.}
\label{fig:case1}
  \end{figure}

Next suppose that the $x$-coordinates of both $t_i$ and $\zt(t_i)$ are
nonpositive.  See Figure~\ref{fig:case2}.
  \begin{figure}[h!]
\centerline{\includegraphics{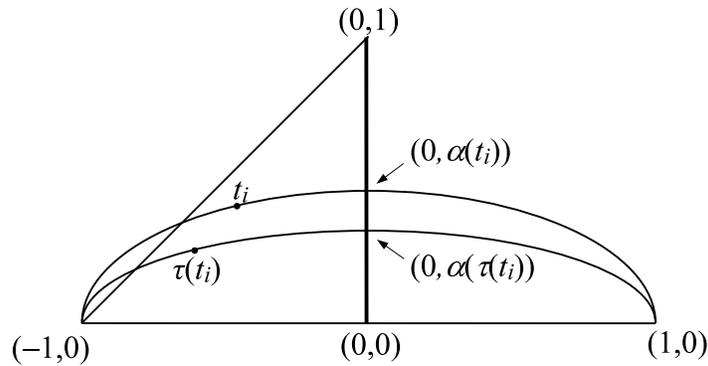}}
\caption{When the $x$-coordinates of both $t_i$ and $\zt(t_i)$ are
nonpositive.}
\label{fig:case2}
  \end{figure}
Since $x^2+a^{-2}y^2=1$, we have that $y^2=a^2(1-x^2)$.
Differentiating this with respect to $x$ yields that $2yy'=-2a^2x$.
Solving for $y'$ and replacing $a^2$ with $y^2/(1-x^2)$ yields that

  \begin{equation*}\linnum\label{lin:yprime}
y'=\frac{xy}{x^2-1}.
  \end{equation*}
Since $y<x+1$ and $x/(x^2-1)\ge 0$ if $-1< x\le 0$, we have that
$y'<x/(x-1)$.  It follows that $y'<1/2$ throughout $T$.  This implies
that the vertical distance from $t_i$ to $E'_i$ is at least $1/n$.
Line~\ref{lin:yprime} implies that $E_i$ increases more than $E'_i$ as
they approach $A$ from the left.  It follows that
$\za(\zt(t_i))<\za(t_i)-1/n$, as desired.

Finally, if the $x$-coordinate of $t_i$ is positive and the
$x$-coordinate of $\zt(t_i)$ is negative, then an argument similar to
that in the previous paragraph shows again that
$\za(\zt(t_i))<\za(t_i)-1/n$.

This completes the proof of Theorem~\ref{thm:estimate}.

\end{proof}

\section{The proof of the dumbbell theorem }
\label{sec:proof}\nosubsections

We prove the dumbbell theorem in this section.

Let $D$ be a dumbbell.  Let $\zr$ be a fat flow optimal weight
function for $D$.  Let $h=H_\zr$.  We scale $\zr$ so that it is a
sum of minimal skinny cuts, making $h$ an integer.  Suppose that $D$
has bar height $n$. Let $C_1,\dotsc,C_{3n}$ be the leftmost $3n$
columns in $B$ in left-to-right order, and let $D_1,\dotsc,D_{3n}$
be the rightmost $3n$ columns in $B$ in right-to-left order.

For every $i\in\{1,\ldots,3n\}$, applying $\zr$ to the tiles in $C_i$
obtains a weight vector $y_i\in \bR^n$ and applying $\zr$ to the tiles
in $D_i$ obtains a weight vector $z_i\in \bR^n$.  From the assumption
that $H_\zr=h$, it follows that $H(y_1)\ge h$, and so by scaling $y_1$
we can obtain a weight vector $x_1\in \cW_h$ such that $A(x_1)\le
A(y_1)$.  Let $j$ be the largest element of $\{1,\dotsc,3n\}$ such
that there exist weight vectors $x_1,\dotsc,x_j\in \cW_h$ with
$A(x_1)\le A(y_1)$ and if $j>1$, then $A(x_j)\ge A(y_j)$ and
$A(x_{i+1})\le A(\zV(x_i))$ for every $i\in\{1,\ldots,j-1\}$.
Similarly, let $k$ be the largest element of $\{1,\dotsc,3n\}$ such
that there exist weight vectors $x_1,\dotsc,x_k\in \cW_h$ with
$A(x_1)\le A(z_1)$ and if $k>1$, then $A(x_k)\ge A(z_k)$ and
$A(x_{i+1})\le A(\zV(x_i))$ for every $i\in\{1,\ldots,k-1\}$.

In this paragraph we define a new weight function $\zs$ on $D$.  Let
$t$ be a tile of $D$.  If $t$ is contained in either one of the
balls of $D$ or one of the columns
$C_1,\dotsc,C_{j-1},D_{k-1},\dotsc,D_1$, then $\zs(t)=\zr(t)$.  If
$t$ is in $B$ and strictly between $C_{3n}$ and $D_{3n}$, then
$\zs(t)=h/n$.  Now suppose that $t$ is contained in $C_j$.  We have
that $\zr$ is a sum of $h$ minimal skinny cuts.  We define $\zs(t)$
to be the number of these skinny cuts $c$ with the property that as
$c$ proceeds from the left side of $D$ to $C_j$, the tile $t$ is the
first tile of $C_j$ in $c$.  This defines $\zs(t)$ for every tile
$t$ in $C_j$.  These values determine a weight vector
$\overline{x}_j\in \cW_h$ in the straightforward way.  If $j<3n$,
then we use the weight vector $x_{j+1}=\zV(\overline{x}_j)$ in the
straightforward way to define $\zs(t)$ for every tile $t$ in
$C_{j+1}$.  We inductively set $x_{i+1}=\zV(x_i)$ and use this
weight vector in the straightforward way to define $\zs(t)$ for
every tile $t$ in $C_{i+1}$ for every $i\in\{j+1,\ldots,3n-1\}$.
This defines $\zs(t)$ for every tile $t$ in $C_j,\dotsc,C_{3n}$.  We
define $\zs(t)$ for every tile $t$ in $D_k,\dotsc,D_{3n}$
analogously.  The definition of $\zs$ is now complete.

By assumption there exist weight vectors $x_1,\dotsc,x_j\in \cW_h$
with $A(x_1)\le A(y_1)$ and if $j>1$, then $A(x_j)\ge A(y_j)$ and
$A(x_{i+1})\le A(\zV(x_i))$ for every $i\in\{1,\ldots,j-1\}$.  In
defining $\zs$ we constructed weight vectors
$\overline{x}_j,x_{j+1},\dotsc,x_{3n}\in \cW_h$ with
$x_{j+1}=\zV(\overline{x}_j)$ and $x_{i+1}=\zV(x_i)$ for every
$i\in\{j+1,\ldots,3n-1\}$.  We redefine $x_j$ to be $\overline{x}_j$.
Then $x_1,\dotsc,x_{3n}$ are weight vectors in $\cW_h$ such that
$A(x_{i+1})\le A(\zV(x_i))$ for every $i\in\{1,\ldots,3n-1\}$.
Theorem~\ref{thm:estimate} implies that $x_{3n}=w_h$.  The situation
is analogous at the right end of $B$.

In this paragraph we prove that $H_\zs=h$.  Because $\zs$ is constant
on the union of the columns of $B$ between $C_{3n-1}$ and $D_{3n-1}$,
the restriction of $\zs$ to these columns is a sum of skinny cuts.
The definition of $\zV$ implies that these skinny cuts can be extended
to the columns of $B$ from $C_j$ to $D_k$ so that the restriction of
$\zs$ to the columns from $C_j$ to $D_k$ is a sum of skinny cuts.  The
definition of $\zs$ implies that these skinny cuts can be extended to
all of $D$ so that $\zs$ is greater than or equal to a weight
function which is a sum of $h$ skinny cuts.  We conclude that
$H_\zs\ge h$.  But since the columns of $D$ from $C_j$ to $D_k$ have
$\zs$-height $h$, it is in fact the case that $H_\zs=h$.

In this paragraph we prove that $A_\zs = A_\zr$.  We proceed by
contradiction.  Suppose that $A_\zs\ne A_\zr$.  Since $H_\zs=h=H_\zr$
and $\zr$ is optimal, it follows that $A_\zr<A_\zs$.  By definition
$\zr$ and $\zs$ agree away from the columns of $B$ from $C_j$ to
$D_k$.  Hence the values of $\zr$ on the tiles of one of the columns
$C$ from $C_j$ to $D_k$ determines a weight vector in $\cW_{h'}$ for
some $h'\ge h$ whose area is less than the weight vector in $\cW_h$
gotten from the values of $\zs$ on the tiles of $C$.  If $C$ is
between $C_{3n}$ and $D_{3n}$, then the weight vector determined by
$\zs$ is $w_h$.  Corollary~\ref{cor:wh} shows that this is impossible
because the area of $w_h$ is minimal.  Thus $C$ is either one of the
columns $C_j,\dotsc,C_{3n}$ or $D_k,\dotsc,D_{3n}$.  By symmetry we
may assume that $C=C_i$ for some $i\in\{j,\ldots,3n\}$.  Then
$A(y_i)<A(x_i)$.  By construction we have that $A(\overline{x}_j)\le
A(y_j)$.  Since we replaced the original value of $x_j$ by
$\overline{x}_j$, we have that $A(x_j)\le A(y_j)$, and so $i>j$.  Now
we have a contradiction to the choice of $j$.  This proves that
$A_\zs=A_\zr$.

Now we have that $H_\zs=H_\zr$ and $A_\zs=A_\zr$.  It follows that
$\zs=\zr$.  Since $\zs$ is virtually bar uniform, we see that $\zr$ is
virtually bar uniform.

This proves the dumbbell theorem.

\section{Notes }\label{sec:notes}\nosubsections

In this section we state without proof a number of results related
to the dumbbell theorem.

We say that a dumbbell $D$ is vertically convex if whenever $x$ and
$y$ are points in $D$ such that the line segment joining them is
vertical, then this line segment lies in $D$.  We say that the sides
of $D$ are extreme if the sides of $D$ are contained in vertical
lines such that $D$ lies between these lines.  Suppose that the
vertical lines determined by our tiling of the plane intersect the
$x$-axis in exactly the set of integers.  We say that a skinny cut
for $D$ is monotonic if it has an underlying path which is the graph
of a function and that the restriction of this function to the
interval determined by any pair of consecutive integers is either
monotonically increasing or decreasing.  If the dumbbell $D$ is
vertically convex and its sides are extreme, then every optimal
weight function for $D$ is a sum of monotonic minimal skinny cuts.

We recall Lemma~\ref{lemma:exists}.  Let $R$ be a rectangle tiled by
$n$ rows and $m$ columns of squares $T_{ij}$.  Let
$x=(x_{11},\dotsc,x_{n1})$ be a weight vector in $\bR^n$.  Then there exists
a unique weight function $\zr$ for $R$ with minimal area subject to
the conditions that $\zr$ is a sum of strictly monotonic skinny cuts
and $\zr(T_{i1})=x_i$ for every $i\in\{1,\ldots,n\}$.  The skinny cut
function $\zV$ is defined so that if $m=2$, then
$\zV(x)=(\zr(T_{12}),\dotsc,\zr(T_{n2}))$.  It is in fact true for every
$m\ge2$ that $\zV^{j-1}(x)=(\zr(T_{1j}),\dotsc,\zr(T_{nj}))$ for every
$j\in\{2,\ldots,m\}$.

Let $h$ be a positive real number, and let $x\in \cW_h$.  Let
$\zp(x)=(p_0,\dotsc,p_n)$.  We define a nonnegative integer $\zm_i$
for every $i\in\{1,\ldots,n-1\}$.  Let $i\in\{1,\ldots,n-1\}$.  If
$p_i<\frac{hi}{n}$, then $\zm_i$ is the number of partition points
$p_i,\dotsc,p_{n-1}$ in the half-closed half-open interval
$[p_i,\frac{hi}{n})$.  If $p_i>\frac{hi}{n}$, then $\zm_i$ is the
number of partition points $p_1,\dotsc,p_i$ in the half-open
half-closed interval $(\frac{hi}{n},p_i]$.  If $p_i=\frac{hi}{n}$,
then $\zm_i=0$.  Set $\zm=\max\{\zm_1,\dotsc,\zm_{n-1}\}$.  Note
that $\zm\le n-1$, and that $\zm=n-1$ for the weight vector
$(0,0,0,\dotsc,h)$.  It turns out that $\zV^m(x)=w_h$ for some
nonnegative integer $m$ if and only if $m\ge \zm$.  This implies
that $\zV^{n-1}(x)=w_h$ for every weight vector $x\in \cW_h$.  In
other words, if $x_1,\dotsc,x_{n-1}$ are weight vectors in $\cW_h$
such that $x_{i+1}=\zV(x_i)$ for every $i\in\{1,\ldots,n-2\}$, then
$x_{n-1}=w_h$.  Compare this with Theorem~\ref{thm:estimate}.  We
see that the integer $3n$ in Theorem~\ref{thm:estimate} cannot be
replaced by an integer less than $n-1$.  It is in fact true that
there exists a positive real number $\ze$ such that the integer $3n$
in Theorem~\ref{thm:estimate} cannot be replaced by an integer less
than $(1+\ze)n$.  It is probably true that $3n$ can be replaced by
$3n/2$.

Not surprisingly, the skinny cut function $\zV$ is continuous.  What
might be surprising is that it is in fact piecewise affine.  If $c_n$
denotes the number of affine pieces of $\zV$, then the sequence
$(c_n)$ satisfies a quadratic linear recursion with eigenvalue
$1+\sqrt{2}$.

There exists a theory of combinatorial moduli ``with boundary
conditions'', as alluded to by Lemma~\ref{lemma:exists}.  For
simplicity we deal with a rectangle $R$ with $n$ rows of tiles.  Let
$h$ be a positive real number, and let $x$ be a weight vector in
$\bR^n$ with height $h$.  Here are two ways to define optimal weight
functions with boundary conditions.  In the first way we minimize
area over all weight functions on $R$ with fat flow height $h$ such
that the values of $\zr$ on the first column of $R$ give $x$.  In
the second way we maximize the fat flow modulus over all weight
functions on $R$ whose values on the first column of $R$ give a
scalar multiple of $x$.  It turns out that these constructions are
equivalent.  They yield a weight function which is unique up to
scaling.  Interpreted properly, most of the results of \cite{Magnus}
hold in this setting. For example, the optimal weight function is a
sum of minimal fat flows (modulo the first column).  A
straightforward modification of the algorithm for computing optimal
weight functions even applies in this setting.


\begin{thebibliography}{1}

\bibitem{Magnus}
J.~W.~Cannon, W.~J.~Floyd, and W.~R.~Parry, {\em Squaring rectangles:
the finite Riemann mapping theorem}, The mathematical legacy of Wilhelm
Magnus: groups, geometry and special functions (Brooklyn, NY, 1992),
Amer. Math. Soc., Providence, RI, 1994, pp. 133-212.

\bibitem{fsr}
J.~W. Cannon, W.~J. Floyd, and W.~R. Parry, \emph{Finite subdivision
rules}, Conform. Geom. Dyn. \textbf{5} (2001), 153--196
(electronic).

\bibitem{expi}
J.~W.~Cannon, W.~J.~Floyd, and W.~R.~Parry, \emph{Expansion
complexes for finite subdivision rules I}, Conform. Geom. Dyn.
\textbf{10} (2006), 63--99 (electronic).

\bibitem{precp}
J.~W.~Cannon, W.~J.~Floyd, and W.~R.~Parry, \emph{precp.p},
software, available from\newline
http://www.math.vt.edu/people/floyd.

\bibitem{HS}
Z.-x.~He and O.~Schramm, \emph{On the convergence of circle packings
to the Riemann map}, Invent. math. \textbf{125} (1996), 285--305.

\bibitem{S}
O.~Schramm, \emph{Square tilings with prescribed combinatorics},
Israel J. Math. \textbf{84} (1993), 97--118.

\bibitem{CP}
K. Stephenson, \emph{CirclePack}, software, available from
http://www.math.utk.edu/\~{}kens.


\end{thebibliography}
\end{document}